\newcommand{\tr}{\operatorname{tr}}
\newcommand{\R}{\mathbb{R}}
\newcommand{\C}{\mathbb{C}}
\newcommand{\F}{\mathcal{F}}
\newcommand{\G}{\mathcal{G}}
\newcommand{\SOC}{\text{SOC}}
\newcommand{\Shor}{{\cal R}_{\text{shor}}}
\newcommand{\RLT}{{\cal R}_{\text{rlt}}}
\newcommand{\SOCRLT}{{\cal R}_{\text{socrlt}}}
\newcommand{\KSOC}{{\cal R}_{\text{ksoc}}}
\newcommand{\Rc}{{\cal R}}
\newcommand{\Cuts}{{\cal R}_{\text{cuts}}}
\renewcommand{\Re}{\text{Re}}
\renewcommand{\Im}{\text{Im}}
\renewcommand{\nu}{R}
\renewcommand{\gamma}{r}
\renewcommand{\alpha}{a}
\renewcommand{\rho}{[c]_{\max}}
\newcommand{\qllowbd}{[q+l]_{\mathrm{lowbd}}}
\newcommand{\st}{\text{s.t.}}
\newcommand{\clconv}[1]{\text{conv}\left\lbrace #1 \right\rbrace}
\newtheorem{theorem}{Theorem}
\newtheorem{corollary}{Corollary}
\newtheorem{lemma}{Lemma}
\newtheorem{proposition}{Proposition}
\newtheorem{example}{Example}
\newtheorem*{observation}{Observation}
\newtheorem{conjecture}{Conjecture}
\author{Anders Eltved\thanks{Department of Applied Mathematics and Computer Science,
        Technical University of Denmark, 2800 Kgs. Lyngby, Denmark. Email:
    {\tt anderseltved@gmail.com}.} \and Samuel Burer%
\thanks{Department of Business Analytics, University of Iowa,
Iowa City, IA, 52242-1994, USA. Email: {\tt samuel-burer@uiowa.edu}.}%
}
\date{September 26, 2020 \\ Revised: July 26, 2021}
\title{Strengthened SDP Relaxation for an \\ Extended Trust Region
Subproblem \\ with an Application to Optimal Power Flow}
\begin{document}

\maketitle

\begin{abstract}
We study an extended trust region subproblem minimizing a nonconvex
function over the hollow ball $r \le \|x\| \le R$ intersected with a
full-dimensional second order cone (SOC) constraint of the form $\|x -
c\| \le b^T x - a$. In particular, we present a class of valid cuts
that improve existing semidefinite programming (SDP) relaxations and are
separable in polynomial time. We connect our cuts to the literature on
the optimal power flow (OPF) problem by demonstrating that previously
derived cuts capturing a convex hull important for OPF are actually
just special cases of our cuts. In addition, we apply our methodology
to derive a new class of closed-form, locally valid, SOC cuts for
nonconvex quadratic programs over the mixed polyhedral-conic set $\{x
\ge 0 : \| x \| \le 1 \}$. Finally, we show computationally on randomly
generated instances that our cuts are effective in further closing the
gap of the strongest SDP relaxations in the literature, especially in
low dimensions.
\end{abstract}

\section{Introduction}

The classical {\em trust region subproblem (TRS)\/} minimizes an
arbitrary quadratic function over the unit Euclidean ball defined by
$\|x\| \le \nu$ and is solvable in polynomial-time \cite{Conn2000}.
Many authors have studied variants of TRS that incorporate additional
constraints. For example, \cite{Stern1995} also imposes the lower bound
$\gamma \le \|x\|$. We collectively refer to variants of TRS that
incorporate more general constraints as the {\em extended TRS\/}. In
this paper, we study the following specific form of the extended TRS,
which incorporates the lower bound $\gamma$ as well as an additional SOC
(second-order cone) constraint, whose ``geometry'' matches the ball in
the sense that its Hessian is also the identity matrix:
\begin{subequations} \label{equ:qcqp_main}
\begin{align}
    \min \ \ \ &x^T H x + 2 \, g^T x \\
\st \ \ \ \ &\gamma \le \|x\| \le \nu \label{sub:qcqp_cons_quad} \\
            &\| x - c \| \le b^Tx - \alpha \label{sub:qcqp_cons_soc}
\end{align}
\end{subequations}
where $x \in \R^n$, $H = H^T \in \R^{n \times n}$, $g, c, b \in \R^n$,
$\alpha \in \R$, and $\gamma, \nu \in \R_+$. Note that $H$ is symmetric
without loss of generality and that we have {\em not\/} scaled the
problem to the unit ball (i.e., we do not assume $R = 1$) as is common
in the TRS literature. The general upper bound $\nu$ will be convenient
for our presentation, especially in Section \ref{sec:applications}. The
algorithm of Bienstock \cite{Bienstock2016} solves (\ref{equ:qcqp_main})
in polynomial time since it can be written as a nonconvex quadratic
program with a fixed number of quadratic/linear constraints (in
this case, four), one of which is strictly convex. However, in this
paper, we are interested in developing tight convex relaxations
of (\ref{equ:qcqp_main}).
In particular, as far as we are aware, (\ref{equ:qcqp_main}) has 
no known tight convex relaxation.

Problem (\ref{equ:qcqp_main}) includes, for example, a special case of the {\em
two trust region subproblem\/}---also called the {\em Celis-Dennis-Tapia
subproblem\/} \cite{Celis1985}---in which a second ball 
constraint is added to TRS. In this case, $r = 0$, $b = 0$, and
$\alpha < 0$. Here, however, we are interested in the more general
structure represented by (\ref{sub:qcqp_cons_soc}), which arises,
for example, in the {\em optimal power flow problem (OPF)\/} as
discussed in Section \ref{sec:applications}. More generally, the study
of (\ref{equ:qcqp_main}) sheds light on any nonconvex quadratically
constrained quadratic program that includes a ball constraint
and a second SOC constraint with identity Hessian. In Section
\ref{sec:applications}, we will also show how this structure is relevant
for the mixed polyhedral-SOC set $\{ x \ge 0 : \|x\| \le R \}$. (In
the concluding Section \ref{sec:conclusions}, we briefly mention an
extension for handling different Hessians.)

Since (\ref{equ:qcqp_main}) is a nonconvex problem, a standard approach
is to approximate (\ref{equ:qcqp_main}) by its so-called {\em Shor
semidefinite programming (SDP) relaxation\/} \cite{Shor1987}, which is
solvable in polynomial time:
\begin{subequations} \label{equ:Shor_main}
\begin{align}
\min \ \ \ &H \bullet X + 2 \, g^T x \\
\st \ \ \ \ &\gamma^2 \le \tr(X) \le \nu^2 \label{sub:Shor_main_trace} \\
            &\tr(X) - 2 \, c^T x + c^T c
            \le bb^T \bullet X - 2 \, \alpha \, b^T x + \alpha^2 \label{sub:Shor_main_soc_1} \\
            &0 \le b^T x - \alpha \label{sub:Shor_main_soc_2} \\
            &Y(x,X) \succeq 0 \label{sub:Shor_psd}
\end{align}
\end{subequations}
where $M \bullet X := \tr(M^T X)$ is the trace inner product for
conformal matrices and
\begin{equation} \label{equ:Ydefn}
    Y(x,X) := \begin{pmatrix} 1 & x^T \\ x & X \end{pmatrix}
\end{equation}
is symmetric of size $(n + 1) \times (n + 1)$.
Note that (\ref{sub:qcqp_cons_soc}) is represented
as the two constraints $\| x - c \|^2 \le (b^T x -
\alpha)^2$ and $0 \le b^T x - \alpha$ before lifting to
(\ref{sub:Shor_main_soc_1})--(\ref{sub:Shor_main_soc_2}). We also define
\[
    \Shor := \{ (x,X) : (x,X) \text{ satisfies
    (\ref{sub:Shor_main_trace})--(\ref{sub:Shor_psd})} \}
\]
to be the feasible set of the Shor relaxation. Then
(\ref{equ:Shor_main}) can be alternatively expressed as minimizing $H
\bullet X + 2 \, g^T x$ over $(x,X) \in \Shor$.

Various valid inequalities can be added to (\ref{equ:Shor_main}) in
order to strengthen the Shor relaxation. For example, if $v_1^T x \ge
u_1$ and $v_2^T x \ge u_2$ are any two valid linear inequalities for
the feasible set of (\ref{equ:qcqp_main}), then the redundant quadratic
constraint $(v_1^T x - u_1) (v_2^T x - u_2) \ge 0$ can be relaxed to
the valid {\em RLT constraint\/} \cite{Sherali1999}:
\[
    v_1 v_2^T \bullet X - u_2 v_1^T x - u_1 v_2^T x + u_1 u_2
    \ge 0.
\]
However, since (\ref{equ:qcqp_main}) does not contain explicit linear
constraints, in practice one would need to separate over valid $v_1^T x
\ge u_1$ and $v_2^T x \ge u_2$ to generate violated RLT constraints, but
this separation is a bilinear subproblem, which does not appear to be
solvable in polynomial time.

The difficulty of separating the RLT constraints when no linear
constraints are explicitly given can be circumvented in the case of
(\ref{equ:qcqp_main}) as follows. By multiplying a valid $v_1^T x \ge
u_1$ with the ball constraint $\|x\| \le \nu$, we have the redundant
quadratic SOC constraint $\| (v_1^T x - u_1) x \| \le R (v_1^T x -
u_1)$, which in turn yields the valid SOC constraint
\begin{equation} \label{equ:socrlt}
    \| X v_1 - u_1 x \| \le \nu(v_1^T x - u_1)
\end{equation}
in the lifted $(x,X)$ space. In a similar manner, $v_1^T x \ge u_1$
can be combined with $\|x - c\| \le b^T x - \alpha$. These are known
as {\em SOCRLT constraints\/} \cite{Sturm2003, Ye2003, Burer2013}. In fact, each
SOCRLT constraint is a compact encoding of an entire collection of
RLT constraints. For example, (\ref{equ:socrlt}) captures all of the
RLT constraints corresponding to $v_1^T x \ge u_1$ fixed and $v_2^T
x \ge u_2$ varying over the supporting hyperplanes of $\|x\| \le
R$. Consequently, the collections of SOCRLT and RLT constraints for
(\ref{equ:qcqp_main}) are equivalent,\footnote{This differs from other
papers, which often define RLT constraints only for explicitly
given valid linear constraints, of which (\ref{equ:qcqp_main})
has none. So, for the sake of generality, we have defined the RLT
constraints allowing for {\em implicit\/} valid linear constraints.}
but in contrast to the RLT constraints, the SOCRLT constraints
can be separated in polynomial-time based on the fact that TRS is
polynomial-time solvable \cite{Burer2013}.

Anstreicher \cite{Anstreicher2017} introduced a further generalization
of the SOCRLT constraints, called a {\em KSOC constraint\/}, which is
based on relaxing a valid quadratic Kronecker-product matrix inequality.
Specifically, the KSOC constraint is constructed from the following
observations: first, defining $\SOC := \{ (v_0, v) : \|v\| \le v_0 \}$
to be the second-order cone, it is well-known that
\[
    {v_0 \choose v} \in \SOC \ \ \Longleftrightarrow \ \
    \begin{pmatrix}
        v_0 & v^T \\ v & v_0 I
    \end{pmatrix} \succeq 0;
\]
second, it is also well-known that the Kronecker product of positive
semidefinite matrices is positive semidefinite. Hence, for (\ref{equ:qcqp_main}) we have the valid
quadratic matrix inequality
\[
    \begin{pmatrix}
        \nu & x^T \\ x & \nu \, I
    \end{pmatrix}
    \otimes
    \begin{pmatrix}
        b^T x - \alpha & x^T - c^T \\ x - c & (b^T x - \alpha) I
    \end{pmatrix}
    \succeq 0.
\]
After relaxing this inequality in the space $(x,X)$, we obtain the
convex KSOC constraint, which captures all SOCRLT constraints (and hence
all RLT constraints) and is generally stronger \cite{Anstreicher2017},
assuming the Shor constraints remain enforced.

Summarizing, defining $\RLT$ and $\SOCRLT$ to be the set of $(x,X)$
satisfying all possible RLT and SOCRLT constraints, respectively, 
we have
\[
    \Shor \cap \KSOC
    \subseteq
    \Shor \cap \SOCRLT
    =
    \Shor \cap \RLT
\]
where $\KSOC$ is the set of all $(x,X)$ satisfying the KSOC
constraint. Moreover, the first containment is proper in general.
Hence, in this paper, we focus on improving
the relaxation $\Shor \cap \KSOC$. The paper \cite{Jiang2019} provides
further insight into the strength of $\Shor \cap \KSOC$ relative to
other techniques in the literature.

Let $\F$ denote the feasible set of (\ref{equ:qcqp_main}),
i.e., the set of all $x \in \R^n$ satisfying
(\ref{sub:qcqp_cons_quad})--(\ref{sub:qcqp_cons_soc}). Strengthening
the SDP relaxation can alternatively be expressed as determining valid
inequalities that more accurately approximate the closed convex hull
\begin{equation} \label{equ:Gdefn}
    \G := \clconv{(x,xx^T) : x \in \F }.
\end{equation}
Note that $\G$ is compact because $\F$ is. Moreover, because linear
optimization over a compact convex set is guaranteed to attain its optimal
value at an extreme point, solving (\ref{equ:qcqp_main}) amounts to
optimizing the linear function $H \bullet X + 2 \, g^T x$ over $\G$ .
While an exact representation of $\G$ is unknown, there are several
closely related cases in which $\G$ can be described exactly; see
\cite{Burer2015,Argue2020}.

In this paper, we propose a new class of valid linear inequalities for
\eqref{equ:qcqp_main} in the space $(x,X)$, which in general strengthen
$\Shor \cap \KSOC$ towards $\G$. 
Each inequality is derived from several ingredients
that exploit the structure of $\F$:  the self-duality of SOC; the RLT-type
valid inequality $(\nu - \|x\|)(\|x\| - \gamma) \ge 0$; and knowledge of
a quadratic function $q(x)$ and a linear function $l(x)$, each of which
is nonnegative over all $x \in \F$. We combine these ingredients to derive
a valid quartic inequality, which is then relaxed to a valid quadratic
inequality, which in turn yields a new valid linear inequality in
$(x,X)$.

As a small illustrative example, consider when \( c = 0 \) and $r =
0$, in which case $\F$ is defined by $\|x\| \le R$ and $\|x\| \le b^T
x - \alpha$. For the specific choices $q(x) = 0$ and $l(x) = 1$, our
new inequality can also be derived from the following direct argument:
the chain of inequalities $\|x\|^2 \le R \|x\| \le R(b^T x - \alpha)$
linearizes to
\begin{equation} \label{equ:basicineq0}
    \tr(X) \le R(b^T x - \alpha).
\end{equation}
The following example shows that (\ref{equ:basicineq0}) is not captured
by \( \Shor\cap\KSOC \):
\begin{example} \label{exa:orthog}
Let $\F = \{ x \in \R^2 : \|x\| \le 1, \|x\| \le 1 - x_1 - x_2 \}$. Then
\eqref{equ:basicineq0} is $\tr(X) \le 1 - x_1 - x_2$. Minimizing the
objective $1 - x_1 - x_2 - \tr(X)$ over $\Shor \cap \KSOC$ yields the
optimal solution
\[ Y^* \approx 
    \begin{pmatrix*}[r]
  1.0000 & 0.0624 &  0.0624 \\
  0.0624 & 0.5000 &  -0.3018 \\
  0.0624 & -0.3018 &   0.5000
\end{pmatrix*}
\]
with (approximate) optimal value \( -0.1248 \), i.e., the optimal value is negative,
which demonstrates that \eqref{equ:basicineq0} is not valid for $\Shor
\cap \KSOC$.
\end{example}

\noindent As far as we are aware, inequality (\ref{equ:basicineq0}) for
this special case has not yet appeared in the literature. We seek in
this paper, however, an even more general procedure for deriving valid
inequalities using the ingredients described in the previous paragraph.

The paper is organized as follows. In Section \ref{sec:new_ineqs},
we present the derivation of our new valid inequalities and discuss
several illustrative choices of $q(x)$ and $l(x)$. We also specialize
the results to $c = 0$ and $a = 0$, a case which further enables the
derivation of a similar, second type of valid linear inequality in
$(x,X)$. Then, in Section \ref{sec:applications}, we show that our
inequalities include those introduced in \cite{Chen2017} for the study
of the OPF problem,\footnote{Indeed, our initial motivation for this
paper was the desire to understand the inequalities in \cite{Chen2017}
more fully.} and we extend our approach to derive a new class of
valid SOC constraints for $\G$ when $\F$ equals the intersection
of the ball $\|x\| \le R$ and the nonnegative orthant. Next, in
Section \ref{sec:separation}, we prove that the separation problem for
our inequalities---which can be viewed as dynamically choosing the
nonnegative functions $q(x)$ and $l(x)$---is polynomial-time based
on the availability of any SDP relaxation in the variables $(x,X)$,
such as the relaxations $\Shor$ or $\Shor \cap \KSOC$. In this sense,
we are able to ``bootstrap'' any existing SDP relaxation for the
separation subroutine to generate valid cuts. Finally, in Section
\ref{sec:computation}, we provide computational evidence that our cuts
are effective in further closing the gap between (\ref{equ:qcqp_main})
and $\Shor \cap \KSOC$ on randomly generated problems, especially in low
dimensions. We close in Section \ref{sec:conclusions} with a few final
thoughts and directions for future research.

This paper is accompanied by the code repository
\url{https://github.com/A-Eltved/strengthened_sdr}, which contains full
code for the paper's examples and computational results. In addition,
the first author's forthcoming Ph.D.~thesis \cite{Eltved2021} will
contain additional discussion and extensions.

%

\section{New Valid Inequalities} \label{sec:new_ineqs}

In the Introduction, we discussed the valid inequality
(\ref{equ:basicineq0}) for the specific case $c = 0$ and $r = 0$.
Now we assume general $c$ and $r$. Analogous to
(\ref{equ:basicineq0}), we use $\|x\| \le \nu$ and $\|x-c\| \le b^T x -
\alpha$ along with the self-duality of $\SOC$ to obtain the following
quadratic inequality:
\begin{equation} \label{equ:basicineq}
    {\nu \choose -x}^T {b^Tx - \alpha \choose x - c} \ge 0
    \quad \Longrightarrow \quad
    \nu (b^T x - \alpha) \ge \tr(X) - c^T x.
\end{equation}
Note that this inequality makes use of the equivalent
constraint $\|-x\| \le \nu$. We seek to strengthen it further by
incorporating two additional ideas.

The first idea involves exploiting the lower bound $\gamma \le \|x\|$
and the RLT-type valid inequality $(R - \|x\|)(\|x\| - r) \ge 0$.
Consider the following proposition:
\begin{proposition} \label{pro:scaledx}
Suppose $\gamma \le \|x\| \le \nu$, and define
$\gamma \|x\|^{-2} := 0$ when $\|x\| = \gamma = 0$. Then
\begin{equation} \label{equ:scaledx}
    \begin{pmatrix}
        \gamma + \nu \\ \left( 1 + \gamma \nu \|x\|^{-2} \right) x
    \end{pmatrix}
    \in \SOC.
\end{equation}
\end{proposition}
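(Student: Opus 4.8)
The plan is to verify membership in $\SOC$ directly by checking the defining inequality $\|(1 + \gamma\nu\|x\|^{-2})x\| \le \gamma + \nu$. Since $\gamma, \nu \ge 0$ and (by hypothesis) $\|x\| \ge 0$, the scalar $1 + \gamma\nu\|x\|^{-2}$ is nonnegative, so the left-hand norm equals $(1 + \gamma\nu\|x\|^{-2})\|x\| = \|x\| + \gamma\nu\|x\|^{-1}$. Thus the claim reduces to the scalar inequality
\[
    \|x\| + \frac{\gamma\nu}{\|x\|} \le \gamma + \nu,
\]
at least when $\|x\| > 0$; the degenerate case $\|x\| = \gamma = 0$ is handled separately by the stated convention, since then the vector in \eqref{equ:scaledx} is $(\nu, 0)$, which is trivially in $\SOC$.

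Next I would reduce the scalar inequality to something manifestly true. Writing $t := \|x\|$, the inequality $t + \gamma\nu/t \le \gamma + \nu$ is equivalent (multiplying through by $t > 0$) to $t^2 + \gamma\nu \le (\gamma + \nu)t$, i.e.\ $t^2 - (\gamma+\nu)t + \gamma\nu \le 0$, which factors as $(t - \gamma)(t - \nu) \le 0$. This is precisely the RLT-type inequality $(R - \|x\|)(\|x\| - r) \ge 0$ alluded to before the proposition, and it holds exactly because $\gamma \le \|x\| \le \nu$. Reading the chain backwards then gives the desired norm bound.

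The only real subtlety — and the step I'd be most careful about — is the boundary/degenerate bookkeeping: one must confirm that the convention $\gamma\|x\|^{-2} := 0$ when $\|x\| = \gamma = 0$ is the \emph{only} way $\|x\|$ can vanish under the hypothesis (indeed $\|x\| \ge \gamma \ge 0$ forces $\gamma = 0$ whenever $\|x\| = 0$), so the coefficient $1 + \gamma\nu\|x\|^{-2}$ is always well-defined, and that in this degenerate case the resulting point still lies in $\SOC$. Everything else is the elementary factorization above, so no serious obstacle remains.
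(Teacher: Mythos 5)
Your proof is correct and follows essentially the same route as the paper's: both reduce the claim to the scalar inequality $\|x\| + \gamma\nu\|x\|^{-1} \le \gamma + \nu$ and deduce it from the RLT-type factorization $(\nu - \|x\|)(\|x\| - \gamma) \ge 0$, with only a cosmetic difference in how the degenerate case is split off (you case on $\|x\| = 0$, the paper cases on $\gamma = 0$). No gaps.
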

\begin{proof}
If $\gamma = 0$, then (\ref{equ:scaledx}) reads $(\nu, x) \in
\SOC$, which is true by assumption. So suppose $0 < \gamma \le \|x\|$.
Then we wish to prove
\[
    (1 + \gamma \nu \|x\|^{-2}) \|x\| = \|x\| + \gamma \nu \|x\|^{-1}
    \le \gamma + \nu,
\]
which follows by expanding the valid expression \( (\nu - \|x\|)(\|x\| -
\gamma) \ge 0 \) and dividing by $\|x\| \ge \gamma > 0$.
\end{proof}
\noindent By the proposition, analogous to (\ref{equ:basicineq}), we
have:
\begin{align*}
    &{\gamma + \nu \choose -(1 + \gamma \nu \|x\|^{-2})x}^T {b^Tx - \alpha \choose x - c} \ge 0 \\
    &\quad\quad \Longleftrightarrow
    (\gamma + \nu) (b^T x - \alpha) \ge x^T x + \gamma \nu - c^T x - \gamma \nu \|x\|^{-2} \, c^T x.
\end{align*}
However, this inequality cannot be directly linearized in $(x,X)$
due to the non-quadratic term $\|x\|^{-2}$. So we bound the term
$\gamma\|x\|^{-2} \, c^T x$ from above by a problem-dependent constant
$\rho \ge 0$, which satisfies $\gamma \, c^T x \le \rho \, x^T x$ for
all $x \in \F$. We then have the valid linear inequality
\begin{equation} \label{equ:basicineq2}
    (\gamma + \nu) (b^T x - \alpha) \ge \tr(X) + \gamma \nu - c^T x - \rho\nu.
\end{equation}
Such a $\rho$ clearly exists. For example, $\rho = \|c\|$ works because
\[
    \gamma \, c^T x \le \gamma \|c\| \|x\| \le \|c\| \|x\|^2,
\]
but naturally it is advantageous to take $\rho$ as small as possible.
One method for computing a smaller $\rho \le \|c\|$ is binary search
on $\rho$ over the interval $[0, \|c\|]$, where at each step we check
whether the optimal value of
\[
    \min_x \left\{
        \rho \, x^T x - \gamma \, c^T x : \|x\| \le \nu, \|x - c\| \le b^T x - \alpha
    \right\}
\]
is nonnegative. The nonconvex lower bound $\gamma \le \|x\|$
has been excluded from this subproblem to ensure convexity and
polynomial-time solvability, which also ensures that the binary search
is polynomial-time overall. Note that the binary search will not always return
the smallest possible $\rho$ due to the exclusion of the lower bound.
Note also that, when $\gamma = 0$ or $c = 0$, the optimal $\rho$ equals $0$.

Our second idea to improve (\ref{equ:basicineq}) and
(\ref{equ:basicineq2}) is to replace $(b^T x - \alpha, x - c) \in \SOC$
in the derivation above with another vector---but one that is still
in the second-order cone. In particular, we consider the nonnegative
combination
\begin{equation} \label{equ:mult_soc}
    q_x {\nu \choose x} + l_x {b^T x - \alpha \choose x - c} \in \SOC,
\end{equation}
where $q_x := q(x)$ is a quadratic function and $l_x := l(x)$ is
a linear function, both of which are nonnegative for all $x \in
\F$. This approach is similar to polynomial-optimization approaches
such as the one pioneered in \cite{Lasserre2000/01}, which uses
polynomial multipliers with limited degree to derive new, albeit
redundant, constraints. Then we have the following generalization of
(\ref{equ:basicineq2}):
\[
    {\gamma + \nu \choose -(1 + \gamma \nu \|x\|^{-2})x}^T {\nu q_x + l_x (b^Tx - \alpha) \choose (q_x + l_x) x - l_x c} \ge 0
\]
which rearranges and relaxes to
\[
    (\gamma + \nu) \nu q_x + (\gamma + \nu) l_x (b^T x - \alpha)
    \ge \left( q_x + l_x \right) x^T x + \gamma \nu \left( q_x + l_x\right) - l_x c^T x - \rho \, \nu \, l_x.
\]
Note that the right-hand side is quartic in $x$, and hence this inequality cannot
be directly linearized in the space $(x,X)$. Hence, we define a constant that
satisfies
\[
    \min \{ q_x + l_x : x \in \F \} \ge \qllowbd \ge 0
\]
to get the valid quadratic inequality
\begin{equation} \label{equ:general_inequality}
    (\gamma + \nu) \nu q_x + (\gamma + \nu) l_x (b^T x - \alpha)
    \ge \qllowbd \, x^T x + \gamma \nu \left( q_x + l_x\right) - l_x c^T x - \rho \nu \, l_x,
\end{equation}
which can be easily linearized in $(x,X)$ as summarized in the
following theorem. Note that the theorem requires only 
that $\qllowbd$ be a nonnegative lower bound on the value of $q(x) +
l(x)$ over $\F$, but generally a larger value gives a tighter valid inequality.

\begin{theorem} \label{the:main}
Let $\F$ be the feasible set of (\ref{equ:qcqp_main}), and let $\rho \in
[0,\|c\|]$ be given such that $\gamma \, c^T x \le \rho x^T x$ for all
$x \in \F$. In addition, let $q(x) := x^T H_q x + 2 \, g_q^T x + f_q$
and $l(x) := 2 \, g_l^T x + f_l$ be given such that $q(x) \ge 0$ and $l(x)
\ge 0$ for all $x \in \F$. Also, let $\qllowbd \ge 0$ be a valid
lower bound on the sum $q(x) + l(x)$ over all $x \in \F$. Then the
linear inequality
\begin{align}
(\gamma + \nu)&\nu \left(H_q \bullet X + 2 \, g_q^T x + f_q \right)
+ (\gamma + \nu) \left( 2 \, g_l b^T \bullet X + (f_l b - 2 \alpha g_l)^T x - \alpha f_l \right) \nonumber \\
&\ge \qllowbd \tr(X) + \gamma\nu \left( H_q \bullet X + 2 (g_q + g_l)^T x + (f_q + f_l) \right) \nonumber \\
& \quad\quad - \left( 2 \, g_l c^T \bullet X + f_l c^T x \right) - \rho \nu (2 \, g_l^T x + f_l) \label{equ:main}
\end{align}
is valid for the convex hull $\G$ defined by (\ref{equ:Gdefn}).
\end{theorem}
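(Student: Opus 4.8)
The plan is essentially bookkeeping: to verify that the quartic/quadratic reasoning already carried out in the text, culminating in inequality \eqref{equ:general_inequality}, linearizes term-by-term into \eqref{equ:main}, and that this linear inequality is valid not merely for the lifted points $(x,xx^T)$ with $x\in\F$, but for the entire closed convex hull $\G$.

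First I would establish validity of \eqref{equ:general_inequality} over $\F$ as a self-contained claim, re-deriving it cleanly rather than relying on the heuristic display chain in the text. Fix $x\in\F$. By Proposition~\ref{pro:scaledx}, the vector $\bigl(\gamma+\nu,\ (1+\gamma\nu\|x\|^{-2})x\bigr)$ lies in $\SOC$; by assumption $q(x)\ge 0$, $l(x)\ge 0$, and $(b^Tx-\alpha,\ x-c)\in\SOC$ (the latter being \eqref{sub:qcqp_cons_soc}, rewritten), so the nonnegative combination \eqref{equ:mult_soc} also lies in $\SOC$. Self-duality of $\SOC$ then gives a nonnegative inner product, which rearranges to
\[
(\gamma+\nu)\nu\,q_x+(\gamma+\nu)l_x(b^Tx-\alpha)
\ \ge\ (q_x+l_x)\|x\|^2+\gamma\nu(q_x+l_x)-l_x c^Tx-\gamma\nu\|x\|^{-2}l_x c^Tx .
\]
Now I bound the two problematic terms from the right-hand side, each in the direction that preserves the inequality: since $q_x+l_x\ge\qllowbd\ge 0$ and $\|x\|^2\ge 0$, replacing $(q_x+l_x)\|x\|^2$ by $\qllowbd\|x\|^2$ only decreases the RHS; and since $\gamma c^Tx\le\rho\|x\|^2$ with $l_x\ge 0$, we have $\gamma\nu\|x\|^{-2}l_x c^Tx\le\rho\nu l_x$, so subtracting $\rho\nu l_x$ instead subtracts at least as much. (The degenerate case $\|x\|=\gamma=0$ must be checked separately, but there the convention $\gamma\|x\|^{-2}:=0$ makes the offending term vanish and the bound $\rho\nu l_x\ge 0$ still holds.) This yields exactly \eqref{equ:general_inequality}.

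Next I would linearize. Substituting $q(x)=x^TH_qx+2g_q^Tx+f_q$ and $l(x)=2g_l^Tx+f_l$ into \eqref{equ:general_inequality} produces a polynomial inequality in $x$ of degree at most four, but — crucially — every quartic and cubic term has already been removed by the two bounding steps, so after expansion the only nonlinear terms are quadratics of the form $xx^T$. Replacing each occurrence of $xx^T$ by the matrix variable $X$ (so that $\|x\|^2\mapsto\tr(X)$, $x^TH_qx\mapsto H_q\bullet X$, $b^Tx\cdot x\mapsto 2g_lb^T\bullet X$ inside the $l_x(b^Tx-\alpha)$ product, $c^Tx\cdot x\mapsto 2g_lc^T\bullet X$ inside $l_xc^Tx$, etc.) gives precisely the left- and right-hand sides displayed in \eqref{equ:main}. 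I would present this as a routine expansion, perhaps isolating the three product terms $\nu q_x$, $l_x(b^Tx-\alpha)$, $l_xc^Tx$ so the reader can match coefficients; the factor $(\gamma+\nu)$ and the $\gamma\nu(q_x+l_x)$ block carry through unchanged. For every $x\in\F$, the point $(x,xx^T)$ satisfies \eqref{equ:main} because it satisfies \eqref{equ:general_inequality}.

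Finally, validity extends from $\{(x,xx^T):x\in\F\}$ to its closed convex hull $\G$ for free: \eqref{equ:main} is a single linear (affine) inequality in $(x,X)$, the set of points satisfying it is a closed half-space, and a closed half-space containing a set contains its closed convex hull. I do not anticipate a genuine obstacle here; the only place demanding care is the first step, namely confirming that both bounding substitutions move the inequality in the \emph{same} (weakening) direction and handling the $\|x\|=\gamma=0$ corner case under the stated convention. That sign-tracking is the part I would write out in full.
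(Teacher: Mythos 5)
Your proposal is correct and takes essentially the same route as the paper, which proves Theorem \ref{the:main} implicitly via the in-text derivation: Proposition \ref{pro:scaledx}, the nonnegative combination \eqref{equ:mult_soc}, self-duality of $\SOC$, the two monotone bounding substitutions (using $\qllowbd$ and $\rho$) yielding \eqref{equ:general_inequality}, term-by-term linearization, and the observation that a linear inequality valid on $\{(x,xx^T):x\in\F\}$ holds on its closed convex hull. Your write-up simply makes explicit the sign-tracking of the two relaxation steps and the $\|x\|=\gamma=0$ corner case, which the paper leaves to the reader.
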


\noindent Note that both sides of (\ref{equ:general_inequality}) contain
the term $\gamma \nu \, q_x$, and so the presentation of both
(\ref{equ:general_inequality}) and (\ref{equ:main}) could be simplified.
However, we leave these slightly unsimplified so as to facilitate our
discussion in Section \ref{sec:chen_ineqs} below.

Let $\hat r$ be any scalar in $[0,r]$. Since $\hat r \le \|x\|$ is also
valid for $\F$, we can replace $r$ by $\hat r$ in (\ref{equ:main}) to
obtain an alternate inequality based on $\hat r$. In fact, considering
$\hat r$ to be variable in this inequality while all other quantities
are fixed, we see that the inequality is linear in $\hat r$, which implies
that all such valid inequalities over $\hat r \in [0,r]$ are actually
dominated (implied) by the two extremes $\hat r = 0$ and $\hat r = r$. We
summarize this observation in the following corollary.

\begin{corollary} \label{cor:main2}
Under the assumptions of Theorem \ref{the:main}, the infinite class
of inequalities gotten by replacing $r$ with $\hat r \in [0,r]$ is
dominated by the two inequalities (\ref{equ:main}) and
\begin{align}
&\nu^2 \left(H_q \bullet X + 2 \, g_q^T x + f_q \right)
+ \nu \left( 2 \, g_l b^T \bullet X + (f_l b - 2 \alpha g_l)^T x - \alpha f_l \right) \nonumber \\
&\quad\quad \ge \qllowbd \tr(X) - \left( 2 \, g_l c^T \bullet X + f_l c^T x \right) - \rho \nu (2 \, g_l^T x + f_l). \label{equ:main2}
\end{align}
corresponding to the extremes $\hat r = r$ and $\hat r = 0$, respectively.
\end{corollary}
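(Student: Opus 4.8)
The plan is to argue that each member of the family is a valid inequality for $\G$, that the family depends \emph{affinely} on the parameter $\hat r$, and that an affine function which is nonnegative at the two endpoints of an interval is nonnegative on the whole interval; the domination claim then follows at once.

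\emph{Step 1: every $\hat r$-version is valid.} I would re-run the derivation that produced \eqref{equ:main} with $r$ replaced by an arbitrary $\hat r \in [0,r]$, checking that each ingredient remains valid over the original feasible set $\F$. The only ingredient whose validity is not immediate is the requirement that the fixed constant $\rho$ satisfy $\hat r\, c^T x \le \rho\, x^T x$ for all $x \in \F$. Given $r\, c^T x \le \rho\, x^T x$ on $\F$ with $\rho \ge 0$, I would split into cases: if $c^T x \ge 0$ then $\hat r\, c^T x \le r\, c^T x \le \rho\, x^T x$; if $c^T x < 0$ then $\hat r\, c^T x \le 0 \le \rho\, x^T x$. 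Since $q$, $l$, and $\qllowbd$ are unaffected by the substitution, and since $\hat r \le \|x\|$, the inequality $(R-\|x\|)(\|x\|-\hat r)\ge 0$, and Proposition \ref{pro:scaledx} with $(\hat r,R)$ all remain valid on $\F$, the resulting linear inequality --- call it $\Phi_{\hat r}(x,X) \ge 0$, with every term collected on the left --- is valid for $\G$.

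\emph{Step 2: affineness and endpoint reduction.} I would observe that $\hat r$ enters \eqref{equ:main} only through the two factors $(r+R)$ on the left and the single factor $rR$ on the right, each affine in $r$, while all remaining coefficients are independent of $r$. Hence $\Phi_{\hat r}(x,X) = A(x,X) + \hat r\, B(x,X)$ for functions $A,B$ not depending on $\hat r$. Plugging in $\hat r = r$ recovers \eqref{equ:main} itself, i.e.\ $\Phi_r = A + rB$, while $\hat r = 0$ reduces \eqref{equ:main} exactly to \eqref{equ:main2}, i.e.\ $\Phi_0 = A$. Then, for any $(x,X)$ satisfying both \eqref{equ:main} and \eqref{equ:main2} we have $A(x,X)\ge 0$ and $A(x,X)+rB(x,X)\ge 0$, so writing $\hat r = t r$ with $t\in[0,1]$,
\[
\Phi_{\hat r}(x,X) \;=\; A(x,X) + tr\,B(x,X) \;=\; (1-t)\,A(x,X) + t\,\bigl(A(x,X)+rB(x,X)\bigr) \;\ge\; 0 ,
\]
so $(x,X)$ satisfies every inequality in the family, which is the assertion.

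The only genuinely non-mechanical point is Step 1 --- verifying that a single $\rho$ (and a single $\qllowbd$) serves for the entire interval, so that ``replace $r$ by $\hat r$'' really yields a one-parameter affine family rather than one whose coefficients themselves move with $\hat r$. Everything after that is the elementary fact that an affine function nonnegative at both endpoints of an interval is nonnegative throughout. I would also note in passing that the two endpoint inequalities are in general incomparable, so neither can be discarded.
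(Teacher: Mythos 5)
Your proof is correct and takes essentially the same route as the paper, which simply notes that each $\hat r$-version is valid (since $\hat r \le \|x\|$ still holds on $\F$) and that the inequality is affine in $\hat r$, hence dominated by the two endpoint inequalities. Your Step 1 check that the fixed $\rho$ continues to satisfy $\hat r\, c^T x \le \rho\, x^T x$ for all $\hat r \in [0,r]$ is a detail the paper leaves implicit, and it is the right thing to verify.
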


\subsection{Example: Slab inequalities} \label{sec:slab_ineqs}

In this subsection, we introduce a specialization of our inequalities,
which we will return to in Section \ref{sec:nonnegF}.

Suppose that we have knowledge of $s \in \R^n$ and $\lambda,\mu \in \R$
such that
\begin{equation} \label{equ:slab}
    \F \subseteq {\cal S} := \{ x : \lambda \le s^T x \le \mu \},
\end{equation}
i.e., every $x \in \F$ satisfies $\lambda \le s^T x \le \mu$. We call
${\cal S}$ a valid {\em slab\/} and, abusing notation, we refer to
${\cal S}$ by its tuple \( (\lambda,s,\mu) \). For example, since \(
\F \) is bounded, for any vector \( s \) with $\|s\| = 1$, choosing \(
\lambda = -\nu \) and \( \mu = \nu \) yields a valid slab. Given any
slab $(\lambda, s, \mu)$, we discuss two choices of nonnegative $q_x$
and $l_x$.

First, define $q_x := \mu - s^Tx \ge 0$ and $l_x := s^T x - \lambda \ge
0$. Note that \( q_x \) is linear in this case, and \(\qllowbd =
q_x + l_x = \mu - \lambda\). Then
\eqref{equ:general_inequality} becomes
\begin{align} 
    (\gamma + \nu) \nu (\mu - s^T x) + &(\gamma + \nu) (s^T x - \lambda) (b^T x - \alpha) \nonumber \\
                                       &\ge (\mu - \lambda) (x^T x + \gamma \nu)  - (s^T x - \lambda) c^T x - \rho \nu (s^T x - \lambda). \label{equ:slabineq}
\end{align}
Alternatively, we could also take $q_x := s^T x - \lambda$ and $l_x :=
\mu - s^T x$ to obtain another, similar quadratic inequality.

Second, given the slab $(\lambda,s,\mu)$, we may assume without loss of
generality that $\lambda + \mu \ge 0$ and $\lambda^2 \le \mu^2$. To see
this, we consider three cases. First, if both $\lambda, \mu \ge 0$, then
the statement is clear. Second, if both $\lambda, \mu \le 0$, we can use
instead the equivalent representation of ${\cal S}$ by $-\mu \le -s^T x
\le -\lambda$. Finally, if $\lambda < 0$ and $\mu \ge 0$ with $\lambda
+ \mu < 0$, then we can likewise use $(-\mu,-s,-\lambda)$ instead. Now,
with $\lambda + \mu \ge 0$ and $\lambda^2 \le \mu^2$, we then define
$q_x := \mu^2 - (s^T x)^2 \ge 0$ and $l_x := (\lambda + \mu)(s^T x -
\lambda) \ge 0$ so that
\begin{align*}
    q_x + l_x
    &= \mu^2 - (s^T x)^2 + (\lambda + \mu) s^T x - \lambda\mu - \lambda^2 \\
    &= \mu^2 + (\mu - s^T x)(s^T x - \lambda) - \lambda^2 \\
    &\ge \mu^2 + 0 - \lambda^2 \ge 0.
\end{align*}
Hence, we obtain \eqref{equ:general_inequality} with $[q+l]_{\min} :=
\mu^2 - \lambda^2 \ge 0$.

\subsection{Example: Special case $c = 0$, $\alpha = 0$, and $\lambda \ge 0$} \label{sec:chen_ineqs}

In this subsection, we derive two cuts---see (\ref{equ:ourchen})
below---that are closely related to the cuts just discussed in Section
\ref{sec:slab_ineqs}, and these will play a special role in Section
\ref{sec:opf}. We assume $c = 0$ and $a = 0$, and we will use a slab
$(\lambda, s, \mu)$ with $\lambda \ge 0$. Note that $c = 0$ implies
$\rho = 0$.

For the first cut, consider the inequality
\eqref{equ:general_inequality} with \( c = 0 \) and $a = 0$, which is
further relaxed on the right-hand side:
\begin{align} 
    (\gamma + \nu) \nu q_x + (\gamma + \nu) l_x b^T x 
    &\ge \qllowbd x^T x + \gamma \nu \left( q_x + l_x\right) \nonumber \\
    &\ge \qllowbd (x^T x + \gamma \nu). \label{equ:chen_ineq1}
\end{align}
For the second cut, we consider a pair of functions \( l_x:=l(x) \)
and \( p_x:=p(x) \) that satisfy a different relationship than the
previously considered \( l_x \) and \( q_x \). Specifically, we assume
linear $l_x \ge 0$ and quadratic $p_x \ge 0$, and we require $l_x - p_x
\ge 0$ for all $x \in \F$ as well. We also define $[l - p]_{\min} \ge
0$ to be the minimum value of $l_x - p_x$ over $\F$. Then we have the
following result.

\begin{proposition}
Suppose $c = 0$, $a = 0$, and $l_x := l(x)$ and $p_x := p(x)$ are
nonnegative functions on $\F$ such that $l_x - p_x$ is also nonnegative
on $\F$. Then, for all $x\in\F$
\[
    {l_x b^T x - \gamma p_x \choose (l_x - p_x) x} \in \SOC.
\]
\end{proposition}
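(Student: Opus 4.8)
The plan is to unwind the definition of $\SOC$, reduce the claim to a single scalar inequality, and then verify that inequality using only the defining constraints of $\F$ under $c = 0$, $a = 0$ together with the sign hypotheses $l_x \ge 0$, $p_x \ge 0$, and $l_x - p_x \ge 0$.

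First I would record what the assumptions give for a fixed $x \in \F$. With $c = 0$ and $a = 0$, the conic constraint $\|x - c\| \le b^T x - a$ becomes $\|x\| \le b^T x$; in particular $b^T x \ge \|x\| \ge 0$. The annulus constraint \eqref{sub:qcqp_cons_quad} gives $\gamma \le \|x\| \le \nu$, hence $\|x\| - \gamma \ge 0$. Since $l_x - p_x \ge 0$, the vector in question has norm $\|(l_x - p_x)x\| = (l_x - p_x)\|x\|$, so membership in $\SOC$ is equivalent to the scalar inequality $(l_x - p_x)\|x\| \le l_x b^T x - \gamma p_x$.

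The key chain of inequalities is
\begin{align*}
    l_x b^T x - \gamma p_x
    &\ge l_x \|x\| - \gamma p_x \\
    &= (l_x - p_x)\|x\| + p_x\bigl(\|x\| - \gamma\bigr) \\
    &\ge (l_x - p_x)\|x\|,
\end{align*}
where the first step multiplies $b^T x \ge \|x\|$ by the nonnegative scalar $l_x$, the middle line is a pure algebraic rearrangement, and the last step uses $p_x \ge 0$ and $\|x\| \ge \gamma$. Combining this with the identity $\|(l_x - p_x)x\| = (l_x - p_x)\|x\|$ yields exactly $(l_x b^T x - \gamma p_x,\ (l_x - p_x)x) \in \SOC$.

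I do not anticipate a genuine obstacle. The only point that needs care is the sign of $l_x - p_x$: its nonnegativity is used both to pull $l_x - p_x$ out of the norm as a nonnegative factor and to keep the direction of the inequality correct after multiplying $\|x\| \le b^T x$ by $l_x$. Since every step uses only nonnegativity and not strict positivity, the argument is unaffected if $l(x)$, $p(x)$, or $l(x) - p(x)$ vanishes at some feasible points, and the degenerate case $x = 0$ (possible only when $\gamma = 0$) is automatically covered because then all terms are $0$.
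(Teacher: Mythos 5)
Your proof is correct and is essentially the paper's own argument: the paper's one-line proof is the chain $(l_x - p_x)\|x\| = l_x\|x\| - p_x\|x\| \le l_x b^T x - \gamma p_x$, which is your middle display read in the opposite direction, using the same two facts ($l_x \ge 0$ with $\|x\| \le b^T x$, and $p_x \ge 0$ with $\gamma \le \|x\|$). Your added remark that $l_x - p_x \ge 0$ is what lets the scalar be pulled out of the norm is a detail the paper leaves implicit, but the route is the same.
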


\begin{proof}
$(l_x - p_x) \|x\| = l_x \|x\| - p_x \|x\| \le l_x b^T x - \gamma p_x.$
\end{proof}

\noindent Using this proposition, the self-duality of the SOC, and
Proposition \ref{pro:scaledx}, we have
\[
    {\gamma + \nu \choose -(1 + \gamma \nu \|x\|^{-2})x}^T {l_x b^Tx 
    - \gamma p_x \choose (l_x - p_x) x} \ge 0,
\]
which rearranges and relaxes to
\begin{align}
    (\gamma + \nu) l_x b^T x - (\gamma + \nu) \gamma p_x
    &\ge \left( l_x - p_x \right) x^T x + \gamma \nu \left( l_x - p_x \right) \nonumber \\
    &\ge [l - p]_{\min} (x^T x + \gamma \nu ). \label{equ:chen_ineq2}
\end{align}
Note that (\ref{equ:chen_ineq2}) simplifies to $R \, l_x \, b^T x \ge
[l - p]_{\min} \, x^T x$ when $r = 0$, which is a consequence of the
simpler inequality $R \, b^T x \ge x^T x$; see (\ref{equ:basicineq0})
with $a = 0$. In other words, (\ref{equ:chen_ineq2}) appears to be
interesting only when $r > 0$.

We now consider a specific choice of \( q_x, l_x \), and \( p_x \) for
the inequalities \eqref{equ:chen_ineq1} and \eqref{equ:chen_ineq2} based
on the slab $0 \le \lambda \le s^T x \le \mu$. We choose $q_x := \mu^2
- (s^T x)^2$, $l_x := (\lambda + \mu) s^T x$, and $p_x := (s^Tx)^2 -
\lambda^2$ as the nonnegative functions, resulting in
\begin{align*}
    q_x + l_x = \mu^2 - (s^T x)^2 + (\lambda + \mu) s^T x  &\ge
    \mu^2 + \lambda \mu =: [q+l]_{\min} \\
    l_x - p_x = \lambda^2 - (s^Tx)^2 + (\lambda + \mu) s^Tx & \ge \lambda^2 +
    \lambda \mu =: [l-p]_{\min},
\end{align*}
where the inequalities follow from the RLT inequality $(\mu
- s^T x)(s^T x - \lambda) \ge 0$. Plugging these into
\eqref{equ:chen_ineq1}--\eqref{equ:chen_ineq2}, respectively, and
linearizing, we obtain
\begin{subequations} \label{equ:ourchen}
\begin{align} 
    (\gamma + \nu) \nu (\mu^2 - ss^T \bullet X) + (\gamma + \nu) (\lambda + \mu) sb^T \bullet X &\ge 
    (\mu^2 + \lambda\mu) (\tr(X) + \gamma \nu)
  \label{equ:ourchen1} \\
    (\gamma + \nu) (\lambda + \mu) sb^T \bullet X - (\gamma + \nu) \gamma ( ss^T \bullet X - \lambda^2) &\ge
    (\lambda^2 + \lambda\mu) (\tr(X) + \gamma \nu).
  \label{equ:ourchen2}
\end{align}
\end{subequations}

\section{Applications} \label{sec:applications}

In this section, we explore two applications of the inequalities
developed in Section \ref{sec:new_ineqs}. The first application shows
that the valid inequalities for the optimal power flow problem (OPF)
derived in \cite{Chen2017} are in fact just special cases of our
inequalities, whereas the derivation in \cite{Chen2017} was specifically
tailored to OPF. Our second application investigates the convex hull of
$\G$, where---departing from the form of (\ref{equ:qcqp_main})---$\F$
equals the intersection of the ball with the nonnegative orthant, i.e.,
$\F$ possesses polyhedral aspects as well. We study this form of $\F$
since it is relevant for any bounded feasible set with nonnegative
variables, where the bound is given by a Euclidean ball.

\subsection{Optimal power flow problem} \label{sec:opf}

In this subsection, we consider a result of Chen et al.~\cite{Chen2017},
which provides an exact formulation for the convex hull of a nonconvex,
quadratically constrained set appearing in the study of the optimal
power flow (OPF) problem. In particular, the authors added two new
linear inequalities to the Shor relaxation in order to capture the
convex hull. Whereas these two inequalities were specifically
derived for OPF, we will show that they are just special cases of
(\ref{equ:ourchen}) derived in Section \ref{sec:chen_ineqs}. For
additional background on convex relaxations of OPF, we refer the
reader to the two-part survey \cite{Low2014a,Low2014}. Here, we briefly mention
that the system \eqref{eq:chenetal} comes from considering a pair of adjacent
buses with their voltage magnitude constraints (\eqref{eq:chenetal:mag}) and
their voltage-angle difference constaints (\eqref{eq:chenetal:pad}); hence,
$L_{jj}$ and $U_{jj}$ are bounds on the voltage magnitude at bus
$j$ and $L_{ij}$ and $U_{ij}$ are bounds on the voltage-angle difference between
buses $i$ and $j$.

We restate the result of Chen et al.~using their notation. Let ${\cal
J}_C \subseteq \R^4$ be the convex hull of the following nonconvex
quadratic system:
\begin{subequations} \label{eq:chenetal}
\begin{align}
&L_{jj} \le W_{jj} \le U_{jj} \quad \forall \ j = 1,2 \label{eq:chenetal:mag} \\
&L_{12} W_{12} \le T_{12} \le U_{12} W_{12} \label{eq:chenetal:pad} \\
&W_{12} \ge 0 \label{eq:chenetal:nonneg} \\
&W_{11} W_{22} = W_{12}^2 + T_{12}^2 \label{eq:chenetal:rank1}
\end{align}
\end{subequations}
where the four variables are $(W_{11}, W_{22}, W_{12}, T_{12}) \in
\R^4$ and the data $L = (L_{11}, L_{22}, L_{12})$ and $U = (U_{11},
U_{22}, U_{12})$ satisfy $L \le U$ and $L_{jj} \ge 0$ for $j=1,2$.
Chen et al.'s interest in this particular convex hull arose from an
analysis of the OPF problem, where \eqref{eq:chenetal} appears as a
repeated substructure. As explained in \cite{Chen2017}, ${\cal
J}_C$ can alternatively be expressed as the following convex hull using
two complex variables $z_1, z_2 \in \C$:
\begin{equation} \label{equ:JC}
    {\cal J}_C = \clconv{
        \begin{pmatrix}
            z_1 z_1^* \\ z_2 z_2^* \\ \Re(z_1 z_2^*) \\ \Im(z_1 z_2^*)
        \end{pmatrix} \in \R^4
        :
        \begin{array}{c}
            L_{jj} \le z_j z_j^* \le U_{jj} \quad \forall \ j = 1,2 \\
            L_{12} \Re(z_1 z_2^*) \le \Im(z_1 z_2^*) \le U_{12} \Re(z_1 z_2^*) \\
            \Re(z_1z_2^*) \ge 0
        \end{array}
    }.
\end{equation}
In particular, equation (\ref{eq:chenetal:rank1}) is the usual
``rank-1'' condition, capturing the link between the linear variables
$(W_{11}, W_{22}, W_{12}, T_{12})$ and the quadratic expressions in
$z_1, z_2$. The authors proved that the pair of linear inequalities
\begin{subequations} \label{equ:chen}
\begin{align}
    \pi_0 + \pi_1 W_{11} + \pi_2 W_{22} + \pi_3 W_{12} + \pi_4 T_{12} \ge U_{22} W_{11} + U_{11} W_{22} - U_{11} U_{22} \label{equ:chen1} \\
    \pi_0 + \pi_1 W_{11} + \pi_2 W_{22} + \pi_3 W_{12} + \pi_4 T_{12} \ge L_{22} W_{11} + L_{11} W_{22} - L_{11} L_{22} \label{equ:chen2}
\end{align}
\end{subequations}
are valid for ${\cal J}_C$, where
\begin{align*}
    \pi_0 &:= -\sqrt{L_{11} L_{22} U_{11} U_{22}} \\
    \pi_1 &:= -\sqrt{L_{22} U_{22}} \\
    \pi_2 &:= -\sqrt{L_{11} U_{11}} \\
    \pi_3 &:= \left( \sqrt{L_{11}} + \sqrt{U_{11}} \right) \left( \sqrt{L_{22}} + \sqrt{U_{22}} \right)
                \frac{ 1 - f(L_{12}) f(U_{12}) }{ 1 + f(L_{12}) f(U_{12}) } \\
    \pi_4 &:= \left( \sqrt{L_{11}} + \sqrt{U_{11}} \right) \left( \sqrt{L_{22}} + \sqrt{U_{22}} \right)
                \frac{ f(L_{12}) + f(U_{12}) }{ 1 + f(L_{12}) f(U_{12}) }
\end{align*}
and where $f(x) := (\sqrt{1 + x^2} - 1)/x$ when $x > 0$ and $f(0) :=
0$. In fact, they proved that (\ref{equ:chen}), when added to the Shor
relaxation, is sufficient to capture ${\cal J}_C$:
\[
    {\cal J}_C
    =
    \left\{
    (W_{11}, W_{22}, W_{12}, T_{12}) 
        :
        \begin{array}{c}
        \text{ (\ref{eq:chenetal:mag})--(\ref{eq:chenetal:nonneg}) } \\
        W_{11} W_{22} \ge W_{12}^2 + T_{12}^2 \\
        \text{ (\ref{equ:chen}) }
        \end{array}
    \right\}.
\]
Here, the convex constraint $W_{11} W_{22} \ge W_{12}^2 + T_{12}^2$
is equivalent to the regular positive-semidefinite condition.

We now relate (\ref{equ:chen}) to our inequalities (\ref{equ:ourchen}).
Defining
\begin{equation} \label{eq:opf_feas_definition}
  \F := \left\{ x \in \R^3 :
      \begin{array}{c}
          L_{11} \le x_1^2 + x_2^2 \le U_{11} \\
          L_{22} \le x_3^2 \le U_{22} \\
          L_{12} x_1 x_3 \le x_2 x_3 \le U_{12} x_1 x_3 \\
          x_1 x_3 \ge 0, \quad x_3 \ge 0
      \end{array}
  \right\}.
\end{equation}
and $\G$ by (\ref{equ:Gdefn}), the following proposition establishes an
equivalence between ${\cal J}_C$ and $\G$.

\begin{proposition} \label{pro:JC=G}
${\cal J}_C = \{ (X_{11} + X_{22}, X_{33}, X_{13}, X_{23}) : (x,X) \in \G \}$.
\end{proposition}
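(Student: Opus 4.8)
The plan is to realize the right-hand side as a linear image of $\G$ and then reduce the claim to an identity between the two underlying (unlifted) nonconvex quadratic sets. Let $\Pi$ denote the linear map sending a pair $(x,X)$ to $(X_{11}+X_{22},\,X_{33},\,X_{13},\,X_{23})\in\R^4$, so that the right-hand side of the proposition is exactly $\Pi(\G)$, and note that $\Pi(x,xx^T)=\phi(x)$ with $\phi(x):=(x_1^2+x_2^2,\,x_3^2,\,x_1x_3,\,x_2x_3)$. The set $\F$ is bounded (by the magnitude constraints in \eqref{eq:opf_feas_definition}) and closed, hence compact, so $\{(x,xx^T):x\in\F\}$ is compact, its convex hull is already closed, and $\G=\conv{(x,xx^T):x\in\F}$. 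Since a linear map commutes with the convex-hull operation and carries a compact set to a compact (hence closed) set, $\Pi(\G)=\conv{\phi(x):x\in\F}$, and because $\{\phi(x):x\in\F\}$ is compact this convex hull coincides with its closed convex hull.

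It therefore suffices to show that $\{\phi(x):x\in\F\}$ equals $S_C$, the set of all $(W_{11},W_{22},W_{12},T_{12})\in\R^4$ satisfying \eqref{eq:chenetal}, because ${\cal J}_C$ is by definition the convex hull of $S_C$ (equivalently, $S_C$ being compact, its closed convex hull) under the identification $W_{11}\leftrightarrow X_{11}+X_{22}$, $W_{22}\leftrightarrow X_{33}$, $W_{12}\leftrightarrow X_{13}$, $T_{12}\leftrightarrow X_{23}$; this is just the real form of \eqref{equ:JC} obtained with $z_1=x_1+\sqrt{-1}\,x_2$ and $z_2=x_3$. For ``$\subseteq$'', given $x\in\F$ and $(W_{11},W_{22},W_{12},T_{12}):=\phi(x)$, the inequalities \eqref{eq:chenetal:mag}, \eqref{eq:chenetal:pad} and \eqref{eq:chenetal:nonneg} follow immediately from the defining inequalities of $\F$ in \eqref{eq:opf_feas_definition}, while \eqref{eq:chenetal:rank1} is the polynomial identity $(x_1^2+x_2^2)x_3^2=(x_1x_3)^2+(x_2x_3)^2$. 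For ``$\supseteq$'', given a point of $S_C$: if $W_{22}>0$, set $x_3:=\sqrt{W_{22}}$, $x_1:=W_{12}/x_3$, $x_2:=T_{12}/x_3$, so that $x_1^2+x_2^2=(W_{12}^2+T_{12}^2)/W_{22}=W_{11}$ by \eqref{eq:chenetal:rank1}, $x_1\ge 0$ since $W_{12}\ge 0$, and the remaining constraints of $\F$ hold by direct inspection; if $W_{22}=0$, then $L_{22}=0$ (because $0\le L_{22}\le W_{22}$) and \eqref{eq:chenetal:rank1} forces $W_{12}=T_{12}=0$, so $x:=(\sqrt{W_{11}},0,0)\in\F$ and $\phi(x)=(W_{11},0,0,0)$. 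In either case $\phi(x)$ is the prescribed point, so $\{\phi(x):x\in\F\}=S_C$, and passing to convex hulls yields $\Pi(\G)={\cal J}_C$.

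The per-constraint verifications in both inclusions are routine substitutions, which I would not spell out in detail. The two points that require a little care are: (i) the identity $\Pi(\G)=\conv{\phi(x):x\in\F}$, which uses compactness of $\F$ both to guarantee that the lifted convex hull is closed and that $\Pi$ transports closed convex hulls to closed convex hulls; and (ii) the degenerate branch $W_{22}=0$ in the surjectivity argument, where one must combine the magnitude bound $L_{22}\le W_{22}$ with the rank-one equation \eqref{eq:chenetal:rank1} to see that the point collapses to $(W_{11},0,0,0)$ with $L_{22}=0$ and therefore still has a preimage in $\F$. I expect (ii) to be the only genuine, if minor, obstacle.
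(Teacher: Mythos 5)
Your proof is correct, but it takes a different route from the paper's. The paper proves ${\cal J}_C = \clconv{(x_1^2+x_2^2,\,x_3^2,\,x_1x_3,\,x_2x_3) : x \in \F}$ by starting from the complex formulation \eqref{equ:JC} and observing that the quadratic monomials $z_1z_1^*$, $z_2z_2^*$, $z_1z_2^*$ are invariant under a simultaneous rotation of $\C$, so one may normalize $z_2$ to a nonnegative real and set $z_1 = x_1 + ix_2$, $z_2 = x_3$; the identification of $\F$ with the normalized feasible set is then immediate and no case analysis is needed (if $z_2=0$ it is already a nonnegative real). You instead bypass \eqref{equ:JC} entirely and verify directly in real variables that $\phi(\F)$ equals the solution set of \eqref{eq:chenetal}, constructing an explicit preimage $x_3=\sqrt{W_{22}}$, $x_1=W_{12}/x_3$, $x_2=T_{12}/x_3$ and handling the degenerate branch $W_{22}=0$ separately via the rank-one equation \eqref{eq:chenetal:rank1}. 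Your version is more self-contained — the paper leans on the equivalence of \eqref{eq:chenetal} with \eqref{equ:JC} ``as explained in \cite{Chen2017}'' — at the cost of the explicit surjectivity check and the degenerate case; the paper's rotation argument is slicker but imports that equivalence. You are also more careful than the paper about the final reduction (compactness of $\F$ so that convex hulls are closed and commute with the linear projection), a step the paper treats as implicit. Both arguments are sound.
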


\begin{proof}
Consider (\ref{equ:JC}). Because the quadratic terms $z_1 z_1^*$, $z_2
z_2^*$, and $z_1 z_2^*$ are unaffected by a rotation of $\C$ applied
simultaneously to both $z_1$ and $z_2$, we may enforce $\Re(z_2) \ge 0$
and $\Im(z_2) = 0$ without changing the definition of ${\cal J}_C$. Then
writing $z_1 = x_1 + i x_2$ and $z_2 = x_3$ for $x \in \R^3$, we thus
have ${\cal J}_C = \clconv{ (x_1^2 + x_2^2, x_3^2, x_1 x_3, x_2 x_3) : x
\in \F \subseteq \R^3 }$, which proves the proposition.
\end{proof}

\noindent Our next proposition establishes an alternative form for $\F$,
which matches the development in Section \ref{sec:new_ineqs} except that
the SOCs involve only two scalar variables, even though $\F$ is
3-dimensional. However, the results of Section \ref{sec:new_ineqs} can
easily be adapted to this case, the key point being that the Hessians of
the SOCs are equal. First we need a lemma.

\begin{lemma} \label{lem:2dimset}
For $n=2$, let ${\cal P} := \{ x \in \R^2 : Ax \le 0 \}$ be a polyhedral cone
with $A \in \R^{2 \times 2}$. Then ${\cal P} = \{ x : \| {x_1 \choose
x_2} \| \le b^T x \}$ for some $b \in \R^2$.
\end{lemma}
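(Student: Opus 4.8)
\textbf{Proof proposal for Lemma \ref{lem:2dimset}.}

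The plan is to show that a polyhedral cone in $\R^2$ given by two homogeneous linear inequalities is always a second-order cone ``slice'' of the form $\|x\| \le b^T x$, by a direct geometric/trigonometric construction. First I would dispose of degenerate cases: if $A$ has rank $0$ (i.e. $A = 0$), then ${\cal P} = \R^2$, which is not of the claimed form, so implicitly we assume ${\cal P}$ is a pointed cone, or more precisely that the two rows $a_1^T, a_2^T$ of $A$ are such that ${\cal P}$ is a genuine ``wedge'' of angular width at most $\pi$; this is exactly the situation arising in the application (\ref{eq:opf_feas_definition}), where ${\cal P}$ comes from the constraints $L_{12} x_1 x_3 \le x_2 x_3 \le U_{12} x_1 x_3$ with $x_3 \ge 0$ and $x_1 x_3 \ge 0$, which cut out a wedge in the right half-plane. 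So I would state the lemma's standing hypothesis as: ${\cal P}$ is a closed convex cone that is pointed and two-dimensional (a wedge with apex angle $\le \pi$).

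The main step is the construction of $b$. Write ${\cal P}$ in polar form: there is a unit vector $d$ (the ``bisector'' of the wedge) and a half-angle $\theta \in [0, \pi/2]$ such that ${\cal P} = \{ x : d^T x \ge \cos\theta \, \|x\| \}$, i.e. ${\cal P}$ consists of all vectors making angle at most $\theta$ with $d$. (The two bounding rays of ${\cal P}$, obtained from the rows of $A$, determine $d$ and $\theta$: $d$ is the unit bisector and $2\theta$ is the angle between the rays.) The inequality $d^T x \ge \cos\theta\,\|x\|$ rearranges, when $\cos\theta > 0$, to $\|x\| \le (1/\cos\theta)\, d^T x = b^T x$ with $b := d/\cos\theta$. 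The equivalence $\{x : d^Tx \ge \cos\theta\|x\|\} = \{x : \|x\| \le b^Tx\}$ is immediate since both just say $d^Tx \ge 0$ and $(d^Tx)^2 \ge \cos^2\theta\,\|x\|^2$. Finally I would check the boundary case $\theta = \pi/2$ (apex angle exactly $\pi$, i.e. ${\cal P}$ is a half-plane $d^T x \ge 0$): here $\cos\theta = 0$ and the formula $b = d/\cos\theta$ breaks down, but a half-plane $\{d^Tx \ge 0\}$ is genuinely \emph{not} representable as $\{\|x\|\le b^Tx\}$ (the latter always has apex angle strictly less than $\pi$), so this case must be excluded by hypothesis; in the OPF application the relevant wedge is strictly pointed because $x_3 \ge 0$ together with $L_{12} \le U_{12}$ forces a proper sub-wedge of the half-plane.

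The one real obstacle is bookkeeping the correspondence between the algebraic data $A$ and the geometric data $(d,\theta)$: given $a_1^T, a_2^T$, the cone $\{a_1^Tx \le 0, a_2^Tx \le 0\}$ is the wedge bounded by the rays orthogonal to $a_1$ and $a_2$ pointing ``into'' the feasible side, and one must verify that these two rays really do bound a wedge of angle $\le \pi$ (equivalently, that $-a_1$ and $-a_2$ lie in a common half-plane, which holds precisely when ${\cal P}$ is pointed) and then read off $d$ as the normalized sum of the two inward unit rays and $\cos\theta$ as their inner product's square-root expression. This is elementary planar trigonometry; I expect no genuine difficulty beyond being careful with orientations and the degenerate configurations (parallel rows of $A$, a zero row, etc.), each of which either reduces to a single half-plane or to all of $\R^2$ and hence is handled by the standing pointedness hypothesis. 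I would therefore present the lemma with that hypothesis made explicit and keep the proof to the half-page polar-coordinate argument sketched above.
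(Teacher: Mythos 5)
Your proof is correct and is essentially the same argument as the paper's: the paper rotates so that ${\cal P}$ is symmetric about the $x_1$-axis and writes it as $\{x : x_1 \ge 0,\ -\beta x_1 \le x_2 \le \beta x_1\} = \{x : \|x\| \le \sqrt{1+\beta^2}\,x_1\}$, which is exactly your bisector construction $b = d/\cos\theta$ with $\beta = \tan\theta$ and $\sqrt{1+\beta^2} = 1/\cos\theta$. Your observation that the lemma as literally stated fails for degenerate $A$ (e.g.\ $A=0$ giving ${\cal P}=\R^2$, or a half-plane/line) is a fair catch: the paper's proof tacitly assumes ${\cal P}$ is a pointed wedge, which does hold in the only place the lemma is used (Proposition \ref{pro:altF}, where $U_{12} > L_{12}$ guarantees a proper wedge).
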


\begin{proof}
First assume that ${\cal P}$ is contained in the right side of the
plane, i.e., ${\cal P} \subseteq \{ x : x_1 \ge 0 \}$ and that ${\cal
P}$ is symmetric about the $x_1$ axis. Then, for some $\beta \ge 0$,
\begin{align*}
    {\cal P} &= \{ x : x_1 \ge 0, -\beta x_1 \le x_2 \le \beta x_1 \} \\
      &= \{ x : x_1 \ge 0, x_2^2 \le \beta^2 x_1^2 \} \\
      &= \{ x : x_1 \ge 0, x_1^2 + x_2^2 \le (1 + \beta^2) x_1^2 \} \\
      &= \{ x : \| \textstyle{{x_1 \choose x_2}} \| \le \sqrt{1 + \beta^2} \, x_1 \},
\end{align*}
which proves the result in this case. For general ${\cal P}$, we may apply
an orthogonal rotation to revert to the previous case, which does not
affect the norm $\|{x_1 \choose x_2} \|$ (but does change the exact form
of $b$).
\end{proof}

\noindent We next state and prove the proposition. Note that the
assumptions \( L_{22} > 0 \) and \( U_{12} > L_{12} \) in the
proposition are realistic for power networks: the first ensures the
voltage magnitude at a bus is positive, and the second allows for a
positive voltage-angle difference between the involved buses.

\begin{proposition} \label{pro:altF}
Suppose $L_{22} > 0$ and $U_{12} > L_{12}$. Then the feasible set, $\F$, defined by
\eqref{eq:opf_feas_definition} satisfies
\[
    \F = \left\{ x \in \R^3 :
        \begin{array}{c}
        \sqrt{L_{11}} \le \left\| {x_1 \choose x_2} \right\| \le \sqrt{U_{11}} \\
        \left\| {x_1 \choose x_2} \right\| \le b_1 x_1 + b_2 x_2 \\
        \sqrt{L_{22}} \le x_3 \le \sqrt{U_{22}}
        \end{array}
    \right\}
\]
where $b_1$ and $b_2$ uniquely solve the system
\[
    \begin{pmatrix} 1 & L_{12} \\ 1 & U_{12} \end{pmatrix}
    \begin{pmatrix} b_1 \\ b_2 \end{pmatrix} =
    \begin{pmatrix} \sqrt{1 + L_{12}^2} \\ \sqrt{1 + U_{12}^2} \end{pmatrix}.
\]
\end{proposition}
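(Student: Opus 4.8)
The plan is to reduce $\F$ from \eqref{eq:opf_feas_definition} to the intermediate form
\[
\Big\{ x \in \R^3 : \sqrt{L_{11}} \le \textstyle\| {x_1 \choose x_2} \| \le \sqrt{U_{11}},\ \ \sqrt{L_{22}} \le x_3 \le \sqrt{U_{22}},\ \ L_{12} x_1 \le x_2 \le U_{12} x_1 \Big\},
\]
one group of constraints at a time, and then apply Lemma \ref{lem:2dimset} to the conic piece $L_{12} x_1 \le x_2 \le U_{12} x_1$. First I would invoke $L_{22} > 0$: together with $L_{22} \le x_3^2$ and $x_3 \ge 0$ it forces $x_3 > 0$, so $L_{22} \le x_3^2 \le U_{22}$ becomes $\sqrt{L_{22}} \le x_3 \le \sqrt{U_{22}}$, and dividing the homogeneous-in-$x_3$ constraints $L_{12} x_1 x_3 \le x_2 x_3 \le U_{12} x_1 x_3$ and $x_1 x_3 \ge 0$ through by the positive number $x_3$ turns them into $L_{12} x_1 \le x_2 \le U_{12} x_1$ and $x_1 \ge 0$. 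Here $U_{12} > L_{12}$ enters: the pair $L_{12}x_1 \le x_2 \le U_{12}x_1$ already yields $(U_{12}-L_{12})x_1 \ge 0$, hence $x_1 \ge 0$, so that constraint is redundant. The first group $L_{11} \le x_1^2 + x_2^2 \le U_{11}$ is plainly $\sqrt{L_{11}} \le \| {x_1 \choose x_2} \| \le \sqrt{U_{11}}$ using $L_{11} \ge 0$. This part is elementary case analysis.

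The remaining work is to rewrite the polyhedral cone ${\cal P} := \{(x_1,x_2) \in \R^2 : L_{12}x_1 \le x_2 \le U_{12}x_1\}$ as the second-order cone $\{(x_1,x_2) : \| {x_1 \choose x_2} \| \le b_1 x_1 + b_2 x_2\}$ for the $b$ described by the stated linear system. Writing ${\cal P} = \{x \in \R^2 : Ax \le 0\}$ with $A$ the $2\times 2$ matrix whose rows are $(L_{12},-1)$ and $(-U_{12},1)$, Lemma \ref{lem:2dimset} already guarantees ${\cal P} = \{x : \| {x_1 \choose x_2} \| \le b^Tx\}$ for some $b$, so it only remains to pin down $b$. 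Because $U_{12} > L_{12}$, the cone ${\cal P}$ is full-dimensional in $\R^2$ with (relative) boundary equal to the two rays generated by $(1,L_{12})$ and $(1,U_{12})$; since the two cones agree as sets, these same two rays form the boundary of the second-order cone, and at any nonzero boundary point $x$ of $\{\| {x_1 \choose x_2} \| \le b^Tx\}$ one has equality $\| {x_1 \choose x_2} \| = b^Tx$. Evaluating this equality at $x=(1,L_{12})$ and $x=(1,U_{12})$ gives exactly
\[
b_1 + L_{12}\,b_2 = \sqrt{1+L_{12}^2}, \qquad b_1 + U_{12}\,b_2 = \sqrt{1+U_{12}^2},
\]
whose coefficient matrix is nonsingular because $U_{12} \ne L_{12}$; hence $b$ is the unique solution of the system in the statement. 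Substituting this description of ${\cal P}$ into the intermediate form of $\F$ then finishes the proof.

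I expect the one genuine subtlety — the ``hard part,'' such as it is — to be this final identification of $b$: one must argue that the two cones, being equal as sets and two-dimensional, share the same pair of boundary rays, and that imposing equality in $\| {x_1 \choose x_2} \| \le b^Tx$ along those rays determines $b$ uniquely, which uses $U_{12} > L_{12}$ both to make ${\cal P}$ full-dimensional and to make the $2\times 2$ system invertible. Everything before that step — eliminating $x_3$, dividing out $x_3 > 0$, discarding the redundant $x_1 \ge 0$, and rewriting the magnitude bounds — is routine.
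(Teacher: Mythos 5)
Your proposal is correct and follows essentially the same route as the paper's proof: eliminate $x_3$ using $L_{22}>0$, discard the redundant $x_1\ge 0$ via $U_{12}>L_{12}$, invoke Lemma \ref{lem:2dimset} for the conic piece, and pin down $b$ by requiring equality $\|r^j\|=b^Tr^j$ on the two extreme rays $(1,L_{12})$ and $(1,U_{12})$, with invertibility from the determinant $U_{12}-L_{12}>0$. Your boundary-ray justification for why equality must hold on the extreme rays is slightly more explicit than the paper's, but it is the same argument.
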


\begin{proof}
The assumption $L_{22} > 0$ implies $x_3 > 0$, which in turn implies
\[
    \F = \left\{ x \in \R^3 :
        \begin{array}{c}
            L_{11} \le x_1^2 + x_2^2 \le U_{11} \\
            \sqrt{L_{22}} \le x_3 \le \sqrt{U_{22}} \\
            L_{12} x_1 \le x_2 \le U_{12} x_1 \\
            x_1 \ge 0
        \end{array}
    \right\}.
\]
Next, the assumption $U_{12} > L_{12}$ makes $x_1 \ge 0$ redundant, and
clearly the first constraint in $\F$ is equivalent to $\sqrt{L_{11}} \le
\| {x_1 \choose x_2} \| \le \sqrt{U_{11}}$.

To complete the proof, we claim that $L_{12} x_1 \le x_2 \le U_{12} x_1$
is equivalent to the SOC constraint $\| {x_1 \choose x_2} \| \le b_1
x_1 + b_2 x_2$. Indeed, it is clear that the set defined by these two
linear inequalities is a polyhedral cone with the two extreme rays $r^1
= {1 \choose L_{12}}$ and $r^2 = {1 \choose U_{12}}$. So, by Lemma
\ref{lem:2dimset}, the set is
SOC-representable in the form $\| {x_1 \choose x_2} \| \le b_1 x_1 + b_2
x_2$ for some $b \in \R^2$. In particular, the extreme rays $r^j$ must
satisfy $\| r^j \| = b^T r^j$. By plugging in the values of $r^1$ and
$r^2$, we get the $2 \times 2$ linear system defining $b$, as desired.
Note that the $2 \times 2$ matrix is invertible because its determinant
$U_{12} - L_{12}$ is positive.
\end{proof}

Based on Propositions \ref{pro:JC=G} and \ref{pro:altF}, we now prove
that (\ref{equ:chen}) is simply (\ref{equ:ourchen}) tailored to the OPF
case.

\begin{theorem}
Inequalities (\ref{equ:chen}) are the inequalities (\ref{equ:ourchen})
tailored to system (\ref{eq:chenetal}).
\end{theorem}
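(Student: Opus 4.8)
The plan is to apply Corollary~\ref{cor:main2}'s inequalities \eqref{equ:ourchen} to the feasible set $\F$ in the form given by Proposition~\ref{pro:altF}, and then to verify by direct algebra that the resulting two linearized inequalities, after translating back through the variable correspondence of Proposition~\ref{pro:JC=G}, coincide with \eqref{equ:chen1} and \eqref{equ:chen2}. First I would read off the ``data'' of \eqref{equ:qcqp_main} from Proposition~\ref{pro:altF}: here the ambient SOC variables are the first two coordinates ${x_1 \choose x_2}$, with $r = \sqrt{L_{11}}$, $R = \sqrt{U_{11}}$, $c = 0$, $\alpha = 0$, and $b = (b_1, b_2)$ solving the stated $2\times 2$ system; the third coordinate $x_3$ is confined to the interval $[\sqrt{L_{22}}, \sqrt{U_{22}}]$, which is exactly a valid slab with $s = (0,0,1)$, $\lambda = \sqrt{L_{22}} \ge 0$, and $\mu = \sqrt{U_{22}}$. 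Since $c = 0$ we have $\rho = 0$, so the hypotheses of Theorem~\ref{the:main} and Corollary~\ref{cor:main2} are met, and the specialization in Section~\ref{sec:chen_ineqs} with $q_x := \mu^2 - (s^Tx)^2 = U_{22} - x_3^2$, $l_x := (\lambda+\mu)s^Tx = (\sqrt{L_{22}}+\sqrt{U_{22}})\,x_3$, $p_x := (s^Tx)^2 - \lambda^2 = x_3^2 - L_{22}$ applies verbatim, yielding $[q+l]_{\min} = U_{22} + \sqrt{L_{22}U_{22}}$ and $[l-p]_{\min} = L_{22} + \sqrt{L_{22}U_{22}}$.

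Next I would write out \eqref{equ:ourchen1} and \eqref{equ:ourchen2} with these substitutions. Using $\tr(X) = X_{11} + X_{22} + X_{33}$, $ss^T\bullet X = X_{33}$, $s b^T \bullet X = b_1 X_{13} + b_2 X_{23}$, and the identifications $W_{11} = X_{11}+X_{22}$, $W_{22} = X_{33}$, $W_{12} = X_{13}$, $T_{12} = X_{23}$ from Proposition~\ref{pro:JC=G}, each of \eqref{equ:ourchen1}--\eqref{equ:ourchen2} becomes an affine inequality in $(W_{11}, W_{22}, W_{12}, T_{12})$. I would then divide through by a suitable positive normalizing constant (the coefficient of $W_{11}$ should be made $U_{22}$ for the first and $L_{22}$ for the second, to match \eqref{equ:chen}) and collect coefficients. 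The key bookkeeping is to verify that the resulting coefficients of $1$, $W_{11}$, $W_{22}$, $W_{12}$, $T_{12}$ match $-\pi_0, -\pi_1, -\pi_2, -\pi_3, -\pi_4$ respectively, after using $r = \sqrt{L_{11}}$, $R = \sqrt{U_{11}}$. Here the appearance of $\sqrt{L_{11}L_{22}U_{11}U_{22}}$ and $\sqrt{L_{22}U_{22}}$ in the $\pi_i$ comes out naturally from the products $rR = \sqrt{L_{11}U_{11}}$, $\lambda\mu = \sqrt{L_{22}U_{22}}$, and $(r+R)(\lambda+\mu)$ appearing in the normalized coefficients.

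The main obstacle will be identifying the coefficients $\pi_3$ and $\pi_4$, which carry the function $f$. These should emerge from the vector $b = (b_1,b_2)$ defined by Proposition~\ref{pro:altF}: solving that $2\times 2$ system gives $b_1 = \frac{U_{12}\sqrt{1+L_{12}^2} - L_{12}\sqrt{1+U_{12}^2}}{U_{12}-L_{12}}$ and $b_2 = \frac{\sqrt{1+U_{12}^2} - \sqrt{1+L_{12}^2}}{U_{12}-L_{12}}$, and I would need to show that the coefficient $(\gamma+\nu)(\lambda+\mu) b_1$ of $W_{12}$, once normalized, equals $\pi_3$, and similarly $(\gamma+\nu)(\lambda+\mu) b_2 = \pi_4$ after normalization. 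This amounts to the pair of identities
\[
    b_1 = \frac{1 - f(L_{12})f(U_{12})}{1 + f(L_{12})f(U_{12})},
    \qquad
    b_2 = \frac{f(L_{12}) + f(U_{12))}}{1 + f(L_{12})f(U_{12})},
\]
which I would verify by substituting $f(x) = (\sqrt{1+x^2}-1)/x$ and simplifying; this is the ``half-angle'' substitution ($f$ is essentially $\tan(\theta/2)$ where $\tan\theta = x$), so the identities are the tangent addition/subtraction formulas in disguise, and the algebra, while tedious, is routine. Once these two identities are in hand, matching the remaining (slab- and ball-derived) coefficients is straightforward, and the two cases $\hat r = r$ (giving \eqref{equ:ourchen1}, which I expect maps to \eqref{equ:chen1}) and the analogous second cut \eqref{equ:ourchen2} (mapping to \eqref{equ:chen2}) complete the proof.
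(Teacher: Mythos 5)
Your proposal follows exactly the paper's own route: translate \eqref{equ:chen} into $(x,X)$ via Proposition~\ref{pro:JC=G}, instantiate \eqref{equ:ourchen} with $\gamma=\sqrt{L_{11}}$, $\nu=\sqrt{U_{11}}$, $\lambda=\sqrt{L_{22}}$, $\mu=\sqrt{U_{22}}$, $s^Tx = x_3$ using Proposition~\ref{pro:altF}, and match coefficients; the only difference is that you make explicit the two identities $b_1 = \bigl(1-f(L_{12})f(U_{12})\bigr)/\bigl(1+f(L_{12})f(U_{12})\bigr)$ and $b_2 = \bigl(f(L_{12})+f(U_{12})\bigr)/\bigl(1+f(L_{12})f(U_{12})\bigr)$ (which are correct), whereas the paper relegates this "tedious algebra" to accompanying Matlab code. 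This is the same proof, correctly executed.
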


\begin{proof}
By Proposition \ref{pro:JC=G}, we can translate (\ref{equ:chen1}) to the
variables $(x,X)$. After collecting terms, (\ref{equ:chen1}) becomes
\begin{equation} \label{eq:chenetal:new1}
    (\pi_0 + U_{11} U_{22}) + (\pi_1 - U_{22}) (X_{11} + X_{22}) +
    (\pi_2 - U_{11}) X_{33} + \pi_3 X_{13} + \pi_4 X_{23} \ge 0.
\end{equation}
Using Proposition \ref{pro:altF}, consider \eqref{equ:ourchen1} with
the following replacements:
\[
    \label{eq:data_in_chen_notation}
    x \leftarrow {x_1 \choose x_2}, \enskip
  \gamma \leftarrow \sqrt{L_{11}}, \enskip
  \nu \leftarrow \sqrt{U_{11}}, \enskip
  \lambda \leftarrow \sqrt{L_{22}}, \enskip
  s^T x \leftarrow x_3, \enskip
  \mu \leftarrow \sqrt{U_{22}}.
\]
This results in the following valid inequality:
\begin{align*}
    &\left(\sqrt{L_{22}U_{22}}+ U_{22}\right)\frac{X_{11} + X_{22} +
    \sqrt{L_{11}U_{11}}}{\sqrt{L_{11}} + \sqrt{U_{11}}} \le \\
    &\quad\quad \left(\sqrt{L_{22}} +
    \sqrt{U_{22}}\right)
    (b_1 X_{13} + b_2 X_{23}) + (U_{22} - X_{33})\sqrt{U_{11}}.
\end{align*}
Simple, although tedious, algebraic manipulations establish that
this inequality is precisely (\ref{eq:chenetal:new1}). A similar
argument establishes that (\ref{equ:chen2}) corresponds to
(\ref{equ:ourchen2}).\footnote{We provide Matlab code for these
manipulations in the file \texttt{chenetal/verify\_chenetal.m} at the website \url{https://github.com/A-Eltved/strengthened_sdr}.}
\end{proof}

\noindent We also verified numerically that
(\ref{equ:chen}) is not captured by $\Shor \cap \KSOC$ in this case.

\subsection{Intersection of the ball and nonnegative orthant} \label{sec:nonnegF}

As stated in the Introduction, the critical feature of $\F$ studied
in this paper is its intersection of the ball with a second
SOC-representable set, which shares the Hessian identity matrix.
However, there are of course many other forms of $\F$ that can be of
interest in practice. For example, when $\F$ is the nonnegative orthant,
then $\G$ is the completely positive cone, which can be used to model
many NP-hard problems as
linear conic programs \cite{Burer2009}. Another common case is when $\F$
is a box, e.g., the set $[0,1]^n$ \cite{Burer2009a}.

Let us examine the case in which $\F$ is the intersection of the
nonnegative orthant and the unit ball. For general $n$, define $\F :=
\{ x \ge 0 : \| x \| \le 1 \} \subseteq \R^n$. Since $$x \in \F \quad
\Rightarrow \quad \|x\| \le \|x\|_1 = e^T x,$$ we have
\begin{equation} \label{equ:nonnegF}
    \F \subseteq \{ x : \|x\| \le 1, \|x\| \le e^T x \},
\end{equation}
and for $n = 2$, one can actually show that (\ref{equ:nonnegF}) is
an equation. Since $\F$ is a subset of the nonnegative orthant, any
inequality, which is valid for the completely positive cone, is
also valid for $\F$, but here we focus on the implied structure in
(\ref{equ:nonnegF}). Section \ref{sec:new_ineqs} applies with $r = 0,
R = 1, c = 0, b = e$, and $\alpha = 0$. In particular, the constraints
$\tr(X) \le 1$ and $\tr(X) \le e^T x$ are valid for $\G$; see the
Introduction and inequality (\ref{equ:basicineq0}).

We can strengthen $\tr(X) \le 1$ and $\tr(X) \le e^T x$ using the slab
inequalities of Section \ref{sec:slab_ineqs}. Geometrically, given
any $s \in \R^n$ with $s \ge 0$ and $\|s\| = 1$, we have the slab
$\lambda := 0 \le s^T x \le 1 =: \mu$, which is valid for $\F$:
\[
    0 \le s^T x \le \|s\| \|x\| = \|x\| \le 1.
\]
After
linearization, inequality (\ref{equ:slabineq}) in this case reads $1 -
s^T x + s^T Xe \ge \tr(X)$. Moreover, if we switch the role of $q_x$ and
$l_x$ in (\ref{equ:slabineq})---recall that $q_x$ is linear for slabs---then
we have $s^T x + e^T x - s^T Xe \ge \tr(X)$. Rearranging, we write
these two inequalities as
\begin{subequations} \label{equ:nonneg}
\begin{align}
    &\tr(X) \le 1 + s^T (Xe - x) \label{equ:nonneg:1} \\
    &\tr(X) \le e^T x - s^T (Xe - x). \label{equ:nonneg:2}
\end{align}
\end{subequations}
Letting $s$ vary over its constraints $\|s\| = 1$ and $s \ge 0$, we
derive a compact SOC-representation of this class of inequalities over
various domains of $\G$.

\begin{theorem} \label{the:nonnegF}
Let $(I,J)$ be a partition of the index set $\{1,\ldots,n\}$, and define the
domain
\[
    {\cal D}_{IJ} := \left\{ (x,X) :
        \begin{array}{l}
        [Xe - x]_I \ge 0 \\ \relax
        [Xe - x]_J \le 0 
        \end{array}
    \right\}.
\]
Then the following SOC constraints are locally valid for $\G$ on
${\cal D}_{IJ}$:
\begin{subequations} \label{equ:nonnegsoc}
    \begin{align}
    &\tr(X) \le 1 - \|[Xe - x]_J\| \label{equ:nonnegsoc:1} \\
    &\tr(X) \le e^T x - \|[Xe - x]_I\|. \label{equ:nonnegsoc:2}
    \end{align}
\end{subequations}
Moreover, (\ref{equ:nonnegsoc}) imply all valid inequalities
(\ref{equ:slabineq}) derived from slabs of the form $0 \le s^T x \le 1$,
where $s$ is any vector satisfying $\|s\| = 1$ and $s \ge 0$.
\end{theorem}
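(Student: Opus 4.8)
The plan is to prove the two SOC constraints \eqref{equ:nonnegsoc} one at a time, then show they dominate the slab family. I focus on \eqref{equ:nonnegsoc:1}; the argument for \eqref{equ:nonnegsoc:2} is symmetric, swapping the roles of the constant $1$ and the linear term $e^T x$ (equivalently, swapping $q_x$ and $l_x$ in the slab construction, cf. \eqref{equ:slabineq} and the two forms \eqref{equ:nonneg:1}--\eqref{equ:nonneg:2}), and swapping the index sets $I$ and $J$. First I would fix $(x,X)\in\G\cap{\cal D}_{IJ}$ and observe it suffices to prove the inequality at the extreme points of $\G$, i.e. at $(x,xx^T)$ with $x\in\F$, provided the \emph{restriction} to ${\cal D}_{IJ}$ behaves well; but since ${\cal D}_{IJ}$ is itself defined by linear inequalities in $(x,X)$, local validity on ${\cal D}_{IJ}$ means: the linear/SOC inequality \eqref{equ:nonnegsoc:1} holds for every $(x,X)\in\G$ that also lies in ${\cal D}_{IJ}$. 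The cleanest route is to show \eqref{equ:nonnegsoc:1} is implied by the already-established slab inequalities \eqref{equ:nonneg:1} over a suitable choice of $s$, together with the sign conditions defining ${\cal D}_{IJ}$.

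Concretely, from \eqref{equ:nonneg:1} we have, for every $s\ge 0$ with $\|s\|=1$,
\[
    \tr(X) \le 1 + s^T(Xe - x) = 1 + \sum_{i\in I} s_i [Xe-x]_i + \sum_{j\in J} s_j [Xe-x]_j .
\]
On ${\cal D}_{IJ}$ the first sum is $\ge 0$ and the second is $\le 0$. To get the strongest bound I would \emph{minimize} the right-hand side over admissible $s$: set $s_i = 0$ for $i\in I$ (killing the nonnegative contribution) and choose $s_J$ to be the unit vector in the direction of $-[Xe-x]_J$, i.e. $s_j = -[Xe-x]_j/\|[Xe-x]_J\|$ for $j\in J$ (which is $\ge 0$ because $[Xe-x]_J\le0$), so that $\sum_{j\in J} s_j[Xe-x]_j = -\|[Xe-x]_J\|$. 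This $s$ satisfies $s\ge 0$ and $\|s\|=1$, hence \eqref{equ:nonneg:1} applies and yields exactly $\tr(X)\le 1 - \|[Xe-x]_J\|$, which is \eqref{equ:nonnegsoc:1}. (The degenerate case $[Xe-x]_J = 0$ is handled by any admissible $s$ supported on $I$, or by continuity.) This same argument, run with the roles reversed, gives \eqref{equ:nonnegsoc:2} from \eqref{equ:nonneg:2}. The final ``moreover'' claim is then immediate: any slab $0\le s^Tx\le 1$ with $\|s\|=1$, $s\ge 0$ produces, upon linearization, exactly \eqref{equ:nonneg:1} (or \eqref{equ:nonneg:2}), and we have just shown each such inequality is a consequence of \eqref{equ:nonnegsoc} evaluated on the partition where $[Xe-x]$ has the sign pattern compatible with the support of $s$ — more simply, \eqref{equ:nonnegsoc:1} with the chosen $J$ implies $\tr(X)\le 1 + s^T(Xe-x)$ for every admissible $s$ with $s_I = 0$, and the general admissible $s$ only adds a nonnegative term, so it is dominated.

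The one point requiring care — and the main (minor) obstacle — is the phrase \emph{locally valid on ${\cal D}_{IJ}$}: I must be precise that \eqref{equ:nonnegsoc} is not asserted on all of $\G$ but only on $\G\cap{\cal D}_{IJ}$, and that the derivation above genuinely uses the sign conditions (without them, the optimal $s$ above need not be admissible, since we would want $s_j<0$ on the coordinates where $[Xe-x]_j>0$). I would also remark that \eqref{equ:nonnegsoc:1} and \eqref{equ:nonnegsoc:2} are indeed SOC-representable: each has the form (affine in $(x,X)$) $\ge \|(\text{linear selection of } Xe-x)\|$, i.e. a rotated-free second-order cone constraint in the lifted variables. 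Everything else is bookkeeping: verifying $\|s\|=1$, the nonnegativity of the constructed $s$, and that linearizing $1 - s^Tx + s^TXe \ge \tr(X)$ from \eqref{equ:slabineq} is exactly \eqref{equ:nonneg:1} (already done in the text preceding the theorem).
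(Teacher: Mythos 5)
Your proof is correct and follows essentially the same route as the paper's: starting from the linearized slab inequalities \eqref{equ:nonneg}, you use the sign conditions defining ${\cal D}_{IJ}$ to minimize the right-hand side over admissible $s$, arriving at $s = (0, -[Xe-x]_J/\|[Xe-x]_J\|)$ and hence \eqref{equ:nonnegsoc:1}, with the symmetric argument for \eqref{equ:nonnegsoc:2}. Your added remarks on the degenerate case $[Xe-x]_J = 0$ and the explicit verification of the ``moreover'' claim are fine and slightly more careful than the paper's one-line treatment.
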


\begin{proof}
Consider the constraints (\ref{equ:nonneg}), and for notational
convenience, define $y := Xe - x$. Because $s \ge 0$, the quantity $s^T
y$ on the right-hand side of (\ref{equ:nonneg:1}) breaks into $s_I^T
y_I \ge 0$ and $s_J^T y_J \le 0$ on ${\cal D}_{IJ}$. By minimizing
the right-hand side of (\ref{equ:nonneg:1}) with respect to $s$, we
achieve the tightest cut corresponding to $s = (s_I, s_J) = (0, -y_J /
\|y_J\|)$, which yields $\tr(X) \le 1 - \|y_J\|$, as desired. A similar
argument for (\ref{equ:nonneg:2}) yields $\tr(X) \le e^T x - \|y_I\|$.
\end{proof}

\noindent We remark that, when $I$ is empty, inequality
(\ref{equ:nonnegsoc:2}) reduces to the inequality $\tr(X) \le
e^T x$ over $D_{IJ}$. Similarly, when $J$ is empty,
(\ref{equ:nonnegsoc:1}) is $\tr(X) \le 1$.

In practice, one idea for using Theorem \ref{the:nonnegF} is as follows.
For a given relaxation in $(x,X)$, solve the relaxation to obtain
an optimal solution $(\bar x, \bar X)$. Then define the partition
$(I,J)$ and corresponding domain $D_{IJ}$ according to $\bar X e -
\bar x$. Then, if either of the inequalities in (\ref{equ:nonnegsoc})
is violated, we can derive a violated supporting hyperplane of the SOC
constraint. After adding the violated linear inequality to the current
relaxation, which is globally valid because it is linear, we can resolve
and repeat the process.

We close this section with an example showing that the cuts derived
above are not implied by $\Shor \cap \KSOC$.

\begin{example}
Let $n=2$, and consider $I = \{1,2\}$ and $J = \emptyset$. Then $\tr(X)
\le e^T x - \| Xe - x\|$ is valid on the domain ${\cal D}_{IJ} = \{
(x,X) : Xe - x \ge 0 \}$. In particular, $\tr(X) \le e^T x - u^T (Xe -
x)$ for all vectors $u$ satisfying $\|u\| = 1$, and taking $u = e_1$, we
have $\tr(X) \le e^T x - [Xe - x]_1$, which is globally valid since
it is linear. Minimizing $e^T x - [Xe - x]_1 - \tr(X)$ over $\Shor \cap
\KSOC$ yields the optimal value $-0.088562$, indicating that $\Shor \cap
\KSOC$ does not capture this valid constraint.
\end{example}

\section{Separation} \label{sec:separation}

In this section, we argue that the inequalities
(\ref{equ:main})--(\ref{equ:main2}) given by Theorem \ref{the:main}
and Corollary \ref{cor:main2} are separable in polynomial time. To
state this result precisely, we assume that $\rho$ has already been
pre-computed and that a fixed convex relaxation of the convex hull $\G$
defined by (\ref{equ:Gdefn}) is available. For convenience, we write this
fixed convex relaxation
\[
    \Rc := \left\{ (x,X) : Y(x,X) \in \widehat\Rc \right\} \supseteq \G,
\]
where $Y(x,X)$ is given by (\ref{equ:Ydefn}) and $\widehat\Rc$ is a
closed, convex cone in the space of $(n+1) \times (n+1)$ symmetric
matrices. In particular, $\Rc$ is just the slice of $\widehat\Rc$
with the top-left corner of $Y$ set to 1. Then the relaxation of
(\ref{equ:qcqp_main}) over $\Rc$ can be stated as $\min\{ H \bullet X +
2 \, g^T x : (x,X) \in \Rc \}$ with dual
\[
    \max \left\{ y : \begin{pmatrix} -y & g^T \\ g & H \end{pmatrix}  \in \widehat\Rc^*\right\}
\]
where $\widehat\Rc^*$ is the dual cone of $\widehat\Rc$. We state this
general form for ease of notation and to make evident that one can choose
different $\Rc$ in computation. For example, one could take $\Rc
= \Shor$ at one extreme or $\Rc = \Shor \cap \KSOC$ at the other.

In fact, to separate (\ref{equ:main})--(\ref{equ:main2}) we will
use the following observation concerning $\Rc$, $\widehat\Rc$, and
$\widehat\Rc^*$:

\begin{observation}
Given a quadratic function $q(x) := x^T H_q x + 2 \, g_q^T x + f_q$, if
there exists $y \in \R$ such that
\[
    \begin{pmatrix}
        -y + f_q & g_q^T \\ g_q & H_q
    \end{pmatrix}
    \in \widehat\Rc^*,
\]
then $q(x) \ge y$ for all $x \in \F$.
\end{observation}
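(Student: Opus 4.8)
The plan is to prove the Observation directly from the definition of the dual cone applied to a rank-one matrix. First I would recall that for any $x \in \F$ we have $(x, xx^T) \in \G \subseteq \Rc$, which by definition of $\Rc$ means $Y(x, xx^T) \in \widehat\Rc$. Explicitly,
\[
    Y(x, xx^T) = \begin{pmatrix} 1 & x^T \\ x & xx^T \end{pmatrix} = \begin{pmatrix} 1 \\ x \end{pmatrix}\begin{pmatrix} 1 \\ x \end{pmatrix}^T \in \widehat\Rc.
\]

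Next I would use the hypothesis that $M := \begin{pmatrix} -y + f_q & g_q^T \\ g_q & H_q \end{pmatrix} \in \widehat\Rc^*$. By the defining property of the dual cone, $M \bullet Z \ge 0$ for every $Z \in \widehat\Rc$; in particular, taking $Z = Y(x, xx^T)$ gives
\[
    0 \le M \bullet Y(x, xx^T) = \begin{pmatrix} 1 \\ x \end{pmatrix}^T M \begin{pmatrix} 1 \\ x \end{pmatrix} = (-y + f_q) + 2\, g_q^T x + x^T H_q x = q(x) - y,
\]
where the middle equality is the standard fact that $M \bullet (vv^T) = v^T M v$. Rearranging yields $q(x) \ge y$ for all $x \in \F$, as claimed.

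There is essentially no obstacle here; the only points requiring care are bookkeeping ones: confirming that the trace inner product $M \bullet Z$ used to define $\widehat\Rc^*$ is consistent with the $\bullet$ notation of the paper, and expanding the quadratic form $v^T M v$ with $v = (1, x)$ correctly so that the constant, linear, and quadratic parts match $f_q - y$, $2 g_q^T x$, and $x^T H_q x$ respectively. One should also note that $\widehat\Rc$ containing all such rank-one lifted matrices is exactly what the containment $\G \subseteq \Rc$ provides, so no separate argument is needed. The result is then immediate.
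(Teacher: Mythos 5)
Your proof is correct and is essentially the paper's argument: the paper invokes weak duality (the dual certificate gives a lower bound on the relaxation value over $\Rc$, which lower-bounds $\min_{x\in\F} q(x)$), and your pairing of $M\in\widehat\Rc^*$ with the rank-one lifted point $Y(x,xx^T)\in\widehat\Rc$ is exactly that weak-duality inequality made explicit.
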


\noindent This observation follows by weak duality because $y$ is
a lower bound on the optimal relaxation value of $H_q \bullet X +
2 \, g_q^T x + f_q$ over $(x,X) \in \Rc$, which is itself a lower
bound on the minimum value of $q(x)$ over $x \in \F$. As a result,
the following system guarantees that the conditions of
Theorem \ref{the:main} on $q(x)$ and $l(x)$ hold, where $(H_q,g_q,f_q)$,
$(g_l,f_l)$, and $[q+l]_{\min}$ are the variables:
\begin{subequations} \label{equ:dualsys}
\begin{align}
    &\begin{pmatrix}
        f_q & g_q^T \\ g_q & H_q
    \end{pmatrix}
    \in \widehat\Rc^*, \quad
    \begin{pmatrix}
        f_l & g_l^T \\ g_l & 0
    \end{pmatrix}
    \in \widehat\Rc^*, \label{equ:dualsys:linear} \\
    &[q+l]_{\min} \ge 0, \quad
    \begin{pmatrix}
        -[q+l]_{\min} + f_q + f_l & (g_q + g_l)^T \\ g_q + g_l & H_q
    \end{pmatrix}
    \in \widehat\Rc^*. \label{equ:dualsys:min}
\end{align}
\end{subequations}

Then, separation amounts to optimizing the linear function in
(\ref{equ:main})---or (\ref{equ:main2}) as the case may be---over
(\ref{equ:dualsys}) for fixed values of $(x, X)$. However, before we
state the exact separation problem for (\ref{equ:main}), we require one additional assumption,
namely that $\F$ is full-dimensional, i.e., there exists $\hat x
\in \F$ such that $\|\hat x\| < R$ and $\|\hat x - c\| < b^T \hat x -
\alpha$. In this case, it is well known that $\G$ and hence $\Rc$
are also full-dimensional in $(x,X)$-space. In
particular, $(\hat x, \hat x \hat x^T) \in \text{int}(\G) \subseteq
\text{int}(\Rc)$, and hence
\[
    \hat Y := {1 \choose \hat x}{1 \choose \hat x}^T \in \text{int}(\widehat\Rc).
\]
It thus follows that $\widehat\Rc^* \cap \{ J : \hat Y \bullet J \le 1
\}$ is a bounded truncation of $\widehat\Rc^*$.\footnote{Indeed,
for any closed, convex cone ${\cal K}$ and dual cone ${\cal K}^*$, given
$\hat x \in \text{int}({\cal K})$, we claim the truncation ${\cal
K}^* \cap \{ s : \hat x^T s \le 1 \}$ is bounded. Specifically, its
recession cone ${\cal K}^* \cap \{ s : \hat x^T s \le 0 \} = \{0\}$.
If not, then some nonzero $\tilde s \in {\cal K}^*$ satisfies $\hat x^T
\tilde s \le 0$. Because $\hat x$ is interior, for sufficiently small
$\epsilon > 0$, the point $\tilde x := \hat x - \epsilon s$ satisfies
$\tilde x \in {\cal K}$ and $\tilde x^T \tilde s < 0$. However, this
contradicts the fact that $\tilde s \in {\cal K}^*$.}
This truncation is important so that the separation problem presented
below has a bounded feasible set and thus has a well-defined optimal
value.

We are now ready to state the separation subproblem for (\ref{equ:main})
given fixed values $(\bar x, \bar X)$ of the variables $(x,X)$:
\begin{subequations} \label{equ:separation_problem}
\begin{align}
    \min \quad
    &(\gamma + \nu) \nu \left(H_q \bullet \bar X + 2 \, g_q^T \bar x + f_q \right)
    + (\gamma + \nu) \left( 2 \, g_l b^T \bullet \bar X + (f_l b - 2 \alpha g_l)^T \bar x - \alpha f_l \right) \\
    & \quad\quad - \qllowbd \tr(\bar X) - \gamma\nu \left( H_q \bullet \bar X + 2 (g_q + g_l)^T \bar x + (f_q + f_l) \right) \nonumber \\
    & \quad\quad + \left( 2 \, g_l c^T \bullet \bar X + f_l c^T \bar x \right) + \rho \nu (2 \, g_l^T \bar x + f_l) \\
    \st \quad &(\ref{equ:dualsys}) \\
              &\hat Y \bullet \begin{pmatrix} f_q & g_q^T \\ g_q & H_q \end{pmatrix} \le 1, \quad
               \hat Y \bullet \begin{pmatrix} f_l & g_l^T \\ g_l & 0 \end{pmatrix} \le 1.
\end{align}
\end{subequations}
The subproblem for (\ref{equ:main2}) is similar---just replace $r$ with
0.

We remark that system (\ref{equ:dualsys}) could be simplified in
certain cases. For example, if $r = 0$ and hence $\F$ is convex,
then it is not difficult to see that the second condition of
(\ref{equ:dualsys:linear}), which ensures that $l(x)$ is nonnegative
over $\F$, could be replaced by a dual system based on $\F$ alone, not
on $\Rc$. One could also simplify by forcing additional structure on
$q(x)$ and $l(x)$. For example, one could separate against the slabs
$\lambda \le s^T x \le \mu$ introduced in Section \ref{sec:slab_ineqs}
by forcing $(H_q, g_q, f_q) = (0, - \frac12 s, \mu)$, $(g_l,f_l) =
(\frac12 s, -\lambda)$, and $[q+l]_{\min} = \mu - \lambda$, in which case
(\ref{equ:dualsys:min}) is automatically satisfied.

The following example demonstrates the separation procedure, whose
implementation will be discussed in the next section:

\begin{example} \label{exa:closed_gap}
  Consider the $2$-dimensional problem
  \begin{align*}
    \min \ \ & -x_1^2 - x_2^2 - 1.1 x_1 - x_2 \\
    \text{s.t.} \ \ & \|x\| \le 1 \\
                  & \|x\| \le 1 - x_1 - x_2
  \end{align*}
  with \( H = -I \), \( g = (-0.55, -0.5) \), \( r = 0 \), \(
  R = 1 \), \( a = -1 \), \( b = (-1,-1) \), and \( c = (0,0) \) in
  \eqref{equ:qcqp_main}. All values reported here are truncated from the
  computations and therefore approximate. The optimal value of \( \min\{H \bullet X + 2g^T x : (x,X)
  \in \Shor\cap\KSOC \} \) is \( -1.1431 \) with optimal solution
  \[
  \bar x = \begin{pmatrix*}[r] 0.2922 \\  -0.1783\end{pmatrix*}, \
  \bar X = \begin{pmatrix*}[r] 0.4963 &  -0.3210 \\ -0.3210 & 0.5037\end{pmatrix*}.
  \]
  Solving the separation subproblem at $(\bar x, \bar X)$, we obtain the cut
  corresponding to
  \begin{align*}
      q_1(x) &= x^T \begin{pmatrix} -0.3812 & 0 \\ 0 & -0.3812 \end{pmatrix} x +
        2 \begin{pmatrix} -0.5578 \\ -0.5531 \end{pmatrix}^T x + 0.8563,
        \\ l_1(x) &= 2 \begin{pmatrix} 0.3462 \\ 0.3608 \end{pmatrix}^T x + 1, \\
        [q_1+l_1]_{\min} &= 1.42.
  \end{align*}
  We add the corresponding cut, resolve to obtain a new $(\bar x, \bar X)$, and
  repeat this loop two more times, resulting in the cuts
  \begin{align*}
      q_2(x) &= x^T \begin{pmatrix*}[r] -0.7065 & 0.1719 \\ 0.1719 & -0.4368 \end{pmatrix*} x +
        2 \begin{pmatrix} -0.7808 \\ -0.7278 \end{pmatrix}^T x + 1,
        \\ l_2(x) &= 2 \begin{pmatrix} 0.3442 \\ 0.3626 \end{pmatrix}^T x + 1, \\
        [q_2+l_2]_{\min} &= 1.155, \\
        q_3(x) &= x^T \begin{pmatrix*}[r] -0.6296 & 0.2398 \\ 0.2398 & -0.4512 \end{pmatrix*} x +
        2 \begin{pmatrix} -0.7868 \\ -0.7580 \end{pmatrix}^T x + 1,
        \\ l_3(x) &= 2 \begin{pmatrix} 0.3479 \\ 0.3591 \end{pmatrix}^T x + 1, \\
        [q_3+l_3]_{\min} &= 1.149.
    \end{align*}
    We finally obtain the rank-1, and hence optimal, solution
  \[ Y(x^\star,X^\star) =
      \begin{pmatrix*}[r] 1 & 0.7071 & -0.7071 \\ 0.7071 & 0.5 & -0.5 \\
    -0.7071 & -0.5 & 0.5 \end{pmatrix*}
  \]
  with objective value \( -1.0707 \). We note that, even though the
  procedure generates three cuts, the last cut is actually enough to
  recover the rank-1 solution. Moreover, running this procedure starting
  from $\Shor$ instead of $\Shor \cap \KSOC$, we also get the same
  optimal $(x^\star, X^\star)$ after adding 16 cuts.



\end{example}

\section{Computational Results} \label{sec:computation}

To quantify the practical effect of the cuts proposed in Theorem
\ref{the:main} and Corollary \ref{cor:main2}, we embed the
separation subproblem described in Section \ref{sec:separation} in a
straightforward implementation to solve random instances of the form
\eqref{equ:qcqp_main}. We consider two relaxations to ``bootstrap'' the
separation procedure: \( \Shor \) and \( \Shor\cap\KSOC \). We will
denote by \( \Cuts \) the points \( (x,X) \) satisfying the added cuts,
so that our improved relaxations will be expressed as \( \Shor\cap\Cuts
\) and \( \Shor\cap\KSOC\cap\Cuts \).

We implement our experiments in Matlab 9.6 (R2019a) using CVX \cite{cvx}
to model the relaxations and MOSEK 9.1 \cite{mosek} to solve them. We run the problem
instances on a single core of an Intel Xeon E5-2650v4 processor using a
maximum of 2GB memory. We do not report
complete run times because we are most interested in the strength of the
added cuts, but we do report the number of cuts added to measure the
overall effort. Recall that calculating a single cut requires solving
the separation problem (\ref{equ:separation_problem}) described in
Section \ref{sec:separation}, which in essence involves three copies of
the current bootstrap relaxation---$\Shor \cap \Cuts$ or $\Shor \cap
\KSOC \cap \Cuts$---since the separation problem includes three sets of variables
constrained to be in the dual cone of the bootstrap relaxation as described by
\eqref{equ:dualsys}. However, to give the reader a sense of the run times,
consider the following: for an instance of our largest dimension, \(
n=10 \), solving \( \Shor \) took approximately 0.6 seconds, solving
\( \Shor\cap\KSOC \) required about 50 seconds, and solving a single
separation problem for \( \Shor\cap\KSOC \) took approximately 64 seconds. We note that our
implementation is rudimentary and makes no effort to take 
advantage of, for example, any particular problem structure or sparsity, so these times
can probably be improved significantly.

We generate a single random instance by fixing the dimension \( n
\) and generating random data \( a,b,c,r,R,H,g \) in such a way that
(\ref{equ:qcqp_main}) is feasible with a known interior point
$\hat x$, which is also randomly generated. In short, we first set
$R = 1$ without loss of generality, generate $r$ uniformly in $[0,R]$,
generate $\hat x$ uniformly in $\{ x : r \le \hat x \le R \}$,
generate $b,c,H,g$ with entries i.i.d.~standard normal, and finally
set $a := b^T \hat x - \|\hat x - c\| - \theta$, where $\theta$
is uniform in $[0,1]$ so that $\F$ has a nonempty interior.\footnote{We refer the reader to our
GitHub site (\url{https://github.com/A-Eltved/strengthened_sdr}) for
the full random-generation procedure.} Recall that $\hat x$ is
required for the separation procedure as discussed in Section
\ref{sec:separation}. Before running the separation procedure for an instance, we compute $\rho
$ by a binary search on $\rho$ over the interval $[0, \|c\|]$
as discussed in Section \ref{sec:new_ineqs}.
Then, when running the overall algorithm, we consider
the current relaxation's optimal solution \( (\bar x, \bar X) \) to be
{\em separated} if: the objective value of the separation subproblem
\eqref{equ:separation_problem} is less than \( \tau_{\text{sep}} =
-10^{-5} \); or the optimal value of the separation subproblem for the
inequalities \eqref{equ:main2} in Corollary \ref{cor:main2}, i.e., 
\eqref{equ:separation_problem} with $r = 0$, 
is less than  $ \tau_{\text{sep}} $.
If $(\bar x, \bar X)$ is indeed separated, we add the resulting
cut represented by the data \( (H_q, g_q, f_q, g_l, f_l, [q+l]_{\min})
\) to the current bootstrap relaxation, optimize for a new point to be
separated, and repeat. The overall loop stops when the current $(\bar x,
\bar X)$ is not separated with tolerance $\tau_{\text{sep}}$.

Regarding a given relaxation and its optimal solution $(\bar x, \bar X)$, we
say the relaxation is {\em exact\/} if \( Y(\bar x, \bar X) \) satisfies
\begin{equation} \label{eq:numerical_rank-1_criterium}
  \frac{\lambda_1(Y(\bar x, \bar X))}{\lambda_2(Y(\bar x, \bar X))}
> \tau_{\text{rank}},
\end{equation}
where \( \lambda_1(M) \) denotes the largest
eigenvalue of \( M \), \( \lambda_2(M) \) denotes the second largest
eigenvalue of \( M \), and \( \tau_{\text{rank}} > 0 \) is a tolerance,
which we choose to be \( 10^4 \) in our implementation, ensuring that
$Y(\bar x, \bar X)$ is numerically rank-1. We define the {\em gap\/} as
the difference between the optimal value of \eqref{equ:qcqp_main} and
the relaxation optimal value. Note that an exact relaxation implies 
a gap of 0.

After running the algorithm on a particular instance, we classify the
instance into one of two categories: {\em exact initial\/} or {\em
inexact initial\/}, when the initial bootstrap relaxation is exact
or inexact, respectively. Furthermore, we break all inexact-initial
instances into one of three subcategories: {\em improved\/}, when the
initial relaxation gap is improved but not completely closed to 0; {\em
closed\/}, when the relaxation becomes exact after adding one or more
cuts (i.e, the resulting $(\bar x, \bar X)$ satisfies
\eqref{eq:numerical_rank-1_criterium}); and {\em no improvement\/}, when no cuts are successfully added to
improve the gap, i.e., the separation routine does not help. (Actually,
in the tables below, we will not directly report information about the
exact-initial and no-improvement instances, as these details will be
implicitly available from the other categories.)

We conduct these experiments for several values of $n$ and many randomly
generated instances. In addition, we also consider special cases
where some of the data \( a,b,c,r,R \) is fixed to zero in order to
assess whether the cuts are more effective in these special cases. In
particular, we consider the following three cases: the general case,
where no data is fixed {\em a priori\/} to zero; the special case
with \( r=a=0 \) and \(c=0 \); and the case of the TTRS (two
trust region subproblem) with \( r=0 \) and \( b=0 \). For each of these cases, we
generate 15,000 instances for each dimension \( 2 \le n \le 10 \), and
we solve each instance twice, once bootstrapping from \( \Shor \) and
once from \( \Shor\cap\KSOC \).

For the improved and closed instances, we report the average number
of cuts added. Also for the improved instances, we report the average
gap closure in percentage terms, i.e., we report the average relative
gap closure. Since we do not actually know the optimal value of
(\ref{equ:qcqp_main}) for the improved instances, to approximate the
relative gap closure from above, we calculate a local minimum value,
\( v_{\text{local}} \), by taking the lowest value of the quadratic
objective function gotten by running Matlab's \texttt{fmincon} with
100 random initial points. The relative gap for the instance is then
calculated as \[ \text{relative gap closure} = \frac{v_{\text{relax
final}} - v_{\text{relax initial}}}{v_{\text{local}} - v_{\text{relax
initial}}} \times 100\%, \] where \( v_{\text{relax initial}} \) is the
optimal value of the initial relaxation and \( v_{\text{relax final}} \)
is the optimal value of the final relaxation.
Note that a larger gap closure corresponds to a stronger relaxation,
i.e., a larger gap closure is better.



\subsection{The general case} \label{sec:numexp_shorksoc_general}

We consider 15,000 random instances for each dimension \( 2\le n\le
10 \) and report the results separately for the \( \Shor \) and \(
\Shor\cap\KSOC \) bootstrap relaxations in Tables \ref{tab:gen_shor}
and \ref{tab:gen_ksoc}, respectively.

In Table \ref{tab:gen_shor}, we see that our cuts improve the \( \Shor
\) relaxation in many instances. For \( n = 2 \), it improves more than
a third of the inexact instances, and it closes the gap for about 9\%.
As the dimension goes up, these proportions go down, suggesting that
our cuts are more effective in lower dimensions.

\begin{table}[ht]
  \centering
  \begin{tabular}{r|r|rrr|rr}
      \( n \) & Inexact initial & Improved & Avg cuts & Avg gap closure & Closed & Avg cuts \\ \hline
      2 &  2923 &  1188 &  15 &  51\% &   264 &   4 \\
      3 &  2582 &   761 &  17 &  46\% &   175 &   7 \\
      4 &  2161 &   422 &  10 &  40\% &    53 &   7 \\
      5 &  1801 &   416 &  10 &  36\% &    46 &   9 \\
      6 &  1583 &   265 &  12 &  36\% &    29 &   8 \\
      7 &  1360 &   186 &  11 &  36\% &    10 &  11 \\
      8 &  1091 &   140 &  14 &  39\% &    15 &   7 \\
      9 &  1029 &   107 &  12 &  34\% &     4 &  15 \\
     10 &   896 &    86 &  13 &  30\% &     4 &  11
  \end{tabular}
  \caption{Results for the \( \Shor \) bootstrap relaxation on 15,000
  random general instances for each dimension $n$. The columns {\em
  Inexact initial\/}, {\em Improved\/}, and {\em Closed\/} report the
  number of instances out of 15,000 in each category.}
  \label{tab:gen_shor}
\end{table}

Table \ref{tab:gen_ksoc} shows that $ \Shor\cap\KSOC $ is generally
quite strong for instances of the form \eqref{equ:qcqp_main}. Especially
for larger \( n \), the number of inexact instances is small, and the
ability of our cuts to improve or close the gaps is limited. 
In particular, for \( n \ge 4 \) our cuts do not improve any of the inexact
instances, which again suggests that the cuts are most
helpful in lower dimensions.

\begin{table}[ht]
  \centering
  \begin{tabular}{r|r|rrr|rr}
      \( n \) & Inexact initial & Improved & Avg cuts & Avg gap closure & Closed & Avg cuts \\ \hline
      2 &   251 &    40 &  13 &  45\% &     3 &   3 \\
      3 &    84 &     5 &  36 &  48\% &     0 & --- \\
      4 &    44 &     0 & --- &   --- &     0 & --- \\
      5 &    16 &     0 & --- &   --- &     0 & --- \\
      6 &     6 &     0 & --- &   --- &     0 & --- \\
      7 &     7 &     0 & --- &   --- &     0 & --- \\
      8 &     2 &     0 & --- &   --- &     0 & --- \\
      9 &     3 &     0 & --- &   --- &     0 & --- \\
     10 &     3 &     0 & --- &   --- &     0 & --- 
  \end{tabular}
  \caption{Results for the \( \Shor\cap\KSOC \) bootstrap relaxation
  on the same 15,000 random general instances as depicted in Table
  \ref{tab:gen_shor} for each dimension $n$. The columns {\em
  Inexact initial\/}, {\em Improved\/}, and {\em Closed\/} report the
  number of instances out of 15,000 in each category.}
  \label{tab:gen_ksoc}
\end{table}

\subsection{Special case: $r = a = 0$ and $c = 0$}

We next consider the special case when $\F$ equals \( \{ x \in \R^n :
\|x\| \le 1, \|x\| \le b^T x\}\) with \( b \in \R^n \). Note that, by
rotating the feasible space, we may assume without loss of generality
that $b$ lies in the direction of $e$, the all ones vector. In
particular, we generate instances with \( b = \beta e \), where
\( \beta \in [1/\sqrt{n}, 1/\sqrt{n} + 2n] \). The choice of this interval for $\beta$ is based
on the following observation: for $\beta < 1/\sqrt{n}$ the feasible space $\F$
is empty; for $\beta = 1/\sqrt{n}$ the feasible space $\F$
has no interior; for $\beta \to \infty$, the constraint
$\|x\| \le b^T x$ resembles the half space $0 \le e^T x$.

Similar to Tables \ref{tab:gen_shor}--\ref{tab:gen_ksoc} of the
previous subsection, Tables \ref{tab:wedge_shor}--\ref{tab:wedge_ksoc}
contain the results of our separation algorithm on 15,000 randomly
generated instances for each dimension, where Table \ref{tab:wedge_shor}
corresponds to $\Shor$ and Table \ref{tab:wedge_ksoc} to
$\Shor\cap\KSOC$. Contrary to what we saw in the general case in Tables
\ref{tab:gen_shor}--\ref{tab:gen_ksoc}, there does {\em not\/} seem to
be a drop in the proportion of instances where the cuts help as \( n \)
increases. Overall, our cuts seem to be quite effective in this special
case.

\begin{table}[ht]
  \centering
  \begin{tabular}{r|r|rrr|rr}
      \( n \) & Inexact initial & Improved & Avg cuts & Avg gap closure & Closed & Avg cuts \\ \hline
    2 &  7744 &  2755 & 22 &  82\% &  4988 & 2 \\
    3 &  7635 &   914 & 23 &  86\% &  6495 & 3 \\
    4 &  7736 &   395 & 13 &  83\% &  6966 & 3 \\
    5 &  7709 &   401 &  4 &  81\% &  6596 & 3 \\
    6 &  7584 &   402 &  5 &  67\% &  7182 & 3 \\
    7 &  7648 &   185 &  5 &  87\% &  7463 & 3 \\
    8 &  7614 &   131 &  8 &  89\% &  7483 & 3 \\
    9 &  7566 &    77 &  7 &  93\% &  7489 & 2 \\
   10 &  7552 &    44 &  7 &  89\% &  7508 & 2
  \end{tabular}
  \caption{Results for the \( \Shor \) bootstrap relaxation on 15,000
  random instances with \( r=a=0 \) and \( c=0 \) for each dimension
  $n$. The columns {\em
  Inexact initial\/}, {\em Improved\/}, and {\em Closed\/} report the
  number of instances out of 15,000 in each category.}
  \label{tab:wedge_shor}
\end{table}

\begin{table}[ht]
  \centering
  \begin{tabular}{r|r|rrr|rr}
      \( n \) & Inexact initial & Improved & Avg cuts & Avg gap closure & Closed & Avg cuts \\ \hline
    2 &    15 &     0 & --- &   --- &    15 & 2 \\
    3 &    50 &     7 &  43 &  37\% &    30 & 2 \\
    4 &    36 &     4 &  78 &  75\% &    28 & 2 \\
    5 &    29 &     0 & --- &   --- &    27 & 3 \\
    6 &    15 &     3 &   8 &  88\% &    12 & 3 \\
    7 &    13 &     2 &   4 &  57\% &    11 & 2 \\
    8 &    12 &     0 & --- &   --- &    12 & 2 \\
    9 &     6 &     0 & --- &   --- &     5 & 1 \\
   10 &     6 &     0 & --- &   --- &     5 & 3
  \end{tabular}
  \caption{Results for the \( \Shor\cap\KSOC \) bootstrap relaxation
  on the same 15,000 random instances as depicted in Table
  \ref{tab:wedge_shor} with \( r=a=0 \) and \( c=0 \) for each dimension
  $n$. The columns {\em
  Inexact initial\/}, {\em Improved\/}, and {\em Closed\/} report the
  number of instances out of 15,000 in each category.}
  \label{tab:wedge_ksoc}
\end{table}

Specifically for $n = 2$, the results in Table \ref{tab:wedge_ksoc}
suggest that \( \Shor\cap\KSOC\cap\Cuts \) is tight, i.e., it captures
the convex hull $\G$. To test this further, we generated an additional
110,000 instances with \( n=2 \). The \( \Shor\cap\KSOC \) relaxation
was exact for 109,938 of these, and our cuts closed the gap for
the remaining 62 instances with an average of 3 cuts added. Our
computational experience thus motivates a conjecture:

\begin{conjecture}
For the 2-dimensional feasible space \( \F := \{ x \in \R^2 : \|x\|
\le 1, \|x\| \le b^T x \} \) with arbitrary \( b \in \R^2 \), \(
\Shor\cap\KSOC\cap\Cuts \) equals the convex hull \( {\cal G} \) defined
in \eqref{equ:Gdefn}.
\end{conjecture}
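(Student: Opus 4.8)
We have $\G \subseteq \Shor \cap \KSOC \cap \Cuts =: \Rc$ for free, since every defining inequality of $\Rc$ is valid for $\G$. For the reverse inclusion, note that both sets are compact (the relaxation is bounded because $\tr(X) \le 1$ and $Y(x,X) \succeq 0$ bound all entries of $(x,X)$) and convex, so it suffices to show that every extreme point of $\Rc$ lies in $\G$. If $(\bar x, \bar X)$ is an extreme point of $\Rc$ with $Y(\bar x, \bar X)$ of rank $1$, then $\bar X = \bar x \bar x^T$, and the Shor constraints (with $r = a = 0$, $c = 0$) force $\|\bar x\| \le 1$ and $\|\bar x\| \le b^T \bar x$, so $\bar x \in \F$ and hence $(\bar x, \bar X) \in \G$. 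By the converse of the Krein--Milman theorem the extreme points of $\G$ are themselves rank-$1$; thus, by Krein--Milman, $\Rc = \G$ will follow once we show that \emph{no} extreme point of $\Rc$ has $\operatorname{rank}(Y(\bar x, \bar X)) \ge 2$.

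After a rotation of $\R^2$ we may assume $b = (\beta, 0)$ with $\beta \ge 0$; the cases $\beta \le 1$ make $\F$ either $\{0\}$ or a segment, which can be handled directly (using, e.g., the slab cuts of Section~\ref{sec:slab_ineqs} built from the degenerate slab $0 \le x_2 \le 0$), so assume $\beta > 1$. Suppose, for contradiction, that $(\bar x, \bar X)$ is an extreme point of $\Rc$ with $Y := Y(\bar x, \bar X)$ of rank $\rho \in \{2,3\}$. The plan is to exhibit a nonzero symmetric $\Delta \in \R^{3\times 3}$ with $\Delta_{00} = 0$ such that $Y \pm \epsilon \Delta$ is feasible for all sufficiently small $\epsilon > 0$; this contradicts extremality. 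For small $\epsilon$, feasibility of $Y \pm \epsilon\Delta$ requires only: (i) $\Delta$ vanishes on $\ker Y$ (vacuous if $\rho = 3$); and (ii) $\Delta$ is orthogonal to every active linear constraint among Shor and $\Cuts$ and lies in the tangent space of the active face of the convex KSOC constraint. The space of symmetric $\Delta$ with $\Delta_{00} = 0$ obeying (i) has dimension $5$ when $\rho = 3$; when $\rho = 2$ it has dimension $2$, because $\ker Y$ is a line distinct from $\operatorname{span}(e_1)$ (as $Ye_1 = (1,\bar x)^T \neq 0$). Thus it remains to rule out, at a would-be extreme point, that the conditions in (ii) are numerous enough ($\ge 5$ independent when $\rho = 3$, $\ge 2$ when $\rho = 2$) to cut this space down to $\{0\}$.

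I expect the case $\rho = 3$ to be comparatively routine: a point with $Y \succ 0$ is ``deep inside'' the PSD cone, and I would argue that the finitely many linear facets of $\Rc$ together with the KSOC face cannot pin down such a point in $5$-space unless one of the Shor constraints $\tr(X) = 1$ or $\tr(X) = bb^T\bullet X$ is active in a way that forces $Y$ to drop rank (handling the residual activity patterns by an explicit perturbation such as $X \mapsto X - \epsilon I$, which preserves $Y \succeq 0$). The heart of the matter is $\rho = 2$: one must show that whenever $\Shor \cap \KSOC$ alone would admit a rank-$2$ extreme point, some cut in $\Cuts$ is violated at it---and, in the limiting case, that no choice of \emph{active} cuts can make $\Delta = 0$ unless $Y$ has already collapsed to rank $1$. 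The main obstacle is carrying this out, and the plan is to exploit the explicit slab cuts: in the present setting ($r = a = 0$, $c = 0$) the pertinent cuts are \eqref{equ:ourchen} (and the basic cut \eqref{equ:basicineq0}), parametrized by valid slabs $0 \le \lambda \le s^T x \le \mu$ with $\|s\| = 1$. Given a rank-$2$ $Y$ feasible for $\Shor \cap \KSOC$, one chooses $s$ aligned with the dominant eigenvector of $\bar X - \bar x \bar x^T$ and $\lambda,\mu$ the tightest bounds valid for $s^T x$ over $\F$, and shows by direct computation that \eqref{equ:ourchen1} or \eqref{equ:ourchen2} is then strictly violated unless $\bar X = \bar x \bar x^T$; the remaining finitely many constraint-activity patterns left by this computation are each shown to be contradictory.

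As an alternative that may tame the case analysis, one can first pass (cf.\ the proof of Lemma~\ref{lem:2dimset}) to the representation $\F = \{x \in \R^2 : \|x\| \le 1,\ p_1^T x \ge 0,\ p_2^T x \ge 0\}$---a ball intersected with two half-planes through a common interior point (the origin)---and try to adapt known exact convex-hull descriptions for trust-region subproblems with a few linear constraints to this specific configuration, checking that our slab cuts \eqref{equ:ourchen} supply precisely the inequalities beyond $\Shor$ and $\KSOC$ that such descriptions require.
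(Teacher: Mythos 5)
This statement is an open conjecture in the paper, not a theorem: the authors support it only with computational evidence (110{,}000 additional random $n=2$ instances plus the tables) and explicitly ``leave this as a conjecture requiring further research.'' So there is no paper proof to compare against, and the relevant question is whether your argument actually closes the conjecture. It does not. What you have written is a proof \emph{strategy} --- reduce to showing $\Rc := \Shor\cap\KSOC\cap\Cuts$ has no extreme points with $\operatorname{rank} Y \ge 2$ --- and the decisive step is deferred in your own words (``The main obstacle is carrying this out''). The claim that, at a rank-2 candidate extreme point, choosing $s$ along the dominant eigenvector of $\bar X - \bar x\bar x^T$ makes \eqref{equ:ourchen1} or \eqref{equ:ourchen2} ``strictly violated unless $\bar X = \bar x\bar x^T$'' is essentially a restatement of the conjecture itself; no computation is supplied, and it is not even clear that the slab cuts alone (a strict subfamily of $\Cuts$) suffice, nor that violation occurs at every rank-2 point rather than only at optimal solutions of the particular objectives tested numerically.

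Two further soft spots. First, your dismissal of the $\rho=3$ case is not sound as stated: $\Cuts$ is an \emph{infinite} family of linear inequalities (parametrized by all admissible $(q,l)$ certified through $\widehat\Rc^*$), so the active constraints at a point with $Y\succ 0$ can easily span the required 5-dimensional space --- ``finitely many linear facets'' is not the situation, and an explicit perturbation argument would be needed. Second, the framing via Krein--Milman is fine and the rank-2 dimension count (a 2-dimensional space of admissible $\Delta$ once $\Delta$ annihilates $\ker Y$ and $\Delta_{00}=0$) is plausible, but everything hinges on the unperformed verification that the active cut normals cannot span that 2-space without forcing rank 1. Until that computation (or the alternative route through an exact convex-hull description of a ball intersected with two half-planes, matched term-by-term against \eqref{equ:ourchen}) is actually carried out, the conjecture remains open and your proposal is a research plan rather than a proof.
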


\noindent In addition, in Section \ref{sec:nonnegF}, for $n = 2$ and
$b = e$, we proposed the locally valid cuts (\ref{equ:nonnegsoc}),
which were derived from slabs of a particular form. (Note that these
cuts would not necessarily be valid for a different scaling $b =
\beta e$.) By generating many random objectives, we were able to
find 100 additional instances, which were {\em not\/} solved exactly
by \( \Shor\cap\KSOC \), and then separated just these locally valid
cuts---instead of the more general cuts represented by $\Cuts$. 
All 100 instances were solved exactly, i.e., achieved the tolerance $\tau_{\text{rank}}$. 
We believe this is strong evidence to support the following conjecture as
well:

\begin{conjecture}
For the 2-dimensional feasible space \( \F := \{ x \in \R^2 : \|x\| \le
1, \|x\| \le e^T x \} = \{ x \ge 0 : \|x\| \le 1 \} \), the constraints
defined by \( \Shor\cap\KSOC\) intersected with the locally valid cuts
(\ref{equ:nonnegsoc}) capture the convex hull \( {\cal G} \) defined in
\eqref{equ:Gdefn}.
\end{conjecture}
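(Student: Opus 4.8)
The plan is to prove the two inclusions $\G\subseteq\mathcal R$ and $\mathcal R\subseteq\G$, where I write $\mathcal R$ for $\Shor\cap\KSOC$ together with the cuts \eqref{equ:nonnegsoc}, read domain-by-domain: on each region $\mathcal D_{IJ}$ one appends the two second-order-cone inequalities \eqref{equ:nonnegsoc:1}--\eqref{equ:nonnegsoc:2} attached to that partition. The forward inclusion $\G\subseteq\mathcal R$ is already in hand --- the Shor constraints and the KSOC constraint are valid for $\G$ by construction and by \cite{Anstreicher2017}, and Theorem~\ref{the:nonnegF} shows that each cut \eqref{equ:nonnegsoc} is valid for $\G$ on its domain $\mathcal D_{IJ}$; since the four $\mathcal D_{IJ}$ cover $(x,X)$-space, this gives $\G\cap\mathcal D_{IJ}\subseteq\mathcal R\cap\mathcal D_{IJ}$ for each partition, hence $\G\subseteq\mathcal R$. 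Everything therefore rests on the reverse inclusion, which I would prove one region at a time: fix a partition $(I,J)$ of $\{1,2\}$ and show $\mathcal R\cap\mathcal D_{IJ}=\G\cap\mathcal D_{IJ}$; taking the union over the four partitions then gives $\mathcal R=\G$, since $\G=\bigcup_{IJ}(\G\cap\mathcal D_{IJ})$.

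On a fixed region, $\mathcal R\cap\mathcal D_{IJ}$ is a compact convex set --- the intersection of the spectrahedron $\Shor\cap\KSOC$ with a polyhedron and two SOC bodies --- so it equals the convex hull of its extreme points, and it suffices to show that every such extreme point has the form $(x,xx^T)$ with $x\in\F$. Recall from the discussion after \eqref{equ:Gdefn} that, since $\tr(X)$ equals $\|x\|^2$ on $\G$ and $\|\cdot\|^2$ is strictly convex, the set $\{(x,xx^T):x\in\F\}$ is precisely the extreme-point set of $\G$; thus such an extreme point automatically lies in $\G$. Membership $x\in\F$ is then free: when $X=xx^T$ the trace bound and the lifted SOC constraint of $\Shor$ collapse to $\|x\|\le 1$ and $\|x\|\le e^Tx$, which for $n=2$ is exactly $x\in\F$ (this is the ``equation'' case of \eqref{equ:nonnegF}), and the cuts then hold automatically since $(x,xx^T)\in\G$. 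Hence the whole problem reduces to the rank-one claim: \emph{every extreme point $(\bar x,\bar X)$ of $\mathcal R\cap\mathcal D_{IJ}$ has $\operatorname{rank}Y(\bar x,\bar X)=1$}, equivalently $W:=\bar X-\bar x\bar x^T$ (which is PSD by a Schur complement) vanishes.

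To establish the rank-one claim I would argue by contradiction. Suppose $W=\beta\,ww^T$ with $\beta>0$, so $Y:=Y(\bar x,\bar X)$ has rank $2$. The face of the $3\times 3$ PSD cone through $Y$ is linearly isomorphic to the $2\times 2$ PSD cone, hence $3$-dimensional; in the $(x,X)$-parametrization, after imposing the normalization $Y_{00}=1$, this leaves a space of symmetric perturbations $(s,T)\in\R^2\times\R^{2\times 2}_{\mathrm{sym}}$ of dimension at least two along which $Y$ stays positive semidefinite. Extremality forces the remaining constraints active at $(\bar x,\bar X)$ --- the trace bounds \eqref{sub:Shor_main_trace}, the lifted SOC constraints, the bounding hyperplanes of $\mathcal D_{IJ}$, the KSOC constraint, and the cuts \eqref{equ:nonnegsoc} --- to cut this perturbation space down to $\{0\}$. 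The heart of the proof is the bookkeeping showing this is impossible when $\beta>0$: I would track how each active constraint constrains $(\bar x,w,\beta)$, the key inputs being that (i) $1-\tr(\bar X)=1-\|\bar x\|^2-\beta\|w\|^2$ and $e^T\bar x-\tr(\bar X)$ both shrink as $\beta$ grows; (ii) the slab inequalities \eqref{equ:nonneg}, after optimizing over $s\ge 0$ with $\|s\|=1$ exactly as in the proof of Theorem~\ref{the:nonnegF}, force $\|[\bar Xe-\bar x]_I\|$ and $\|[\bar Xe-\bar x]_J\|$ to be small relative to those two shrinking quantities; and (iii) the KSOC constraint pins down the direction of $w$. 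Combining (i)--(iii) with the explicit geometry of the quarter-disk $\F$ should leave no room for $\beta>0$.

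I expect step~(iii) --- extracting a usable, quantitative restriction on $w$ from KSOC, a non-polyhedral spectrahedral condition, and meshing it with the locally valid slab cuts --- to be the main obstacle, and I anticipate a case split according to how many coordinates of $\bar Xe-\bar x$ vanish (equivalently, which faces of $\mathcal D_{IJ}$ are active) and according to whether $\|\bar x\|<1$ or $\|\bar x\|=1$. Two alternative routes are worth attempting in parallel. First, one could bypass the spectrahedral computation by comparing against the exact convex-hull descriptions known for closely related sets \cite{Burer2015,Argue2020}: the feasible set here is the unit ball intersected with the polyhedral cone $\{x\ge 0\}$, a ``spherical wedge,'' and one can ask whether its conic lift is rank-one generated and matches the proposed description facet by facet. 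Second, one could try to reduce to the preceding conjecture (the $\Shor\cap\KSOC\cap\Cuts$ version) by showing that, specialized to $\F=\{x\ge 0:\|x\|\le 1\}\subseteq\R^2$, every cut produced by Theorem~\ref{the:main} is already implied by $\Shor\cap\KSOC$ together with the slab cuts \eqref{equ:nonnegsoc} --- i.e., that the separation subproblem \eqref{equ:separation_problem} always admits an optimal solution of the restricted ``slab'' form $(H_q,g_q,f_q)=(0,-\tfrac12 s,1)$, $(g_l,f_l)=(\tfrac12 s,0)$ with $s\ge 0$, $\|s\|=1$ --- which again comes down to the low-dimensional geometry of the quarter-disk.
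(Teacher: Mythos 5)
The statement you are addressing is stated in the paper as a \emph{conjecture}: the authors offer no proof, only computational evidence (they report that on $100$ additional $n=2$ instances not solved exactly by $\Shor\cap\KSOC$, separating just the locally valid cuts \eqref{equ:nonnegsoc} closed the gap in every case). So there is no proof in the paper to compare against, and your submission must stand on its own as a proof. It does not: it is a plan whose decisive step is left unexecuted. Your forward inclusion $\G\subseteq\mathcal R$ is fine, and the reduction of the reverse inclusion to a statement about extreme points is the natural move, but the entire content of the conjecture is concentrated in the rank-one claim, and there you write only that you ``would track'' the active constraints and that you ``expect step (iii) to be the main obstacle.'' No argument is given that the perturbation space cannot be cut down to $\{0\}$ when $\beta>0$; the interaction of the KSOC spectrahedral condition with the SOC cuts \eqref{equ:nonnegsoc} is exactly where the difficulty lives, and it is not addressed. (You also consider only $\operatorname{rank}Y=2$; rank $3$ must be handled too, even if it is easier.)

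There is also a structural problem with the reduction itself. You slice by the polyhedra $\mathcal{D}_{IJ}$ and claim it suffices to show that every extreme point of $\mathcal R\cap\mathcal{D}_{IJ}$ has $\operatorname{rank}Y=1$. But even if the conjecture is true, so that $\mathcal R\cap\mathcal{D}_{IJ}=\G\cap\mathcal{D}_{IJ}$, the hyperplanes $[Xe-x]_i=0$ bounding $\mathcal{D}_{IJ}$ are not in general supporting hyperplanes of $\G$; intersecting $\G$ with them typically creates new extreme points that are nontrivial convex combinations of generators and hence \emph{not} rank-one. So the claim ``every extreme point of $\mathcal R\cap\mathcal{D}_{IJ}$ is rank-one'' is almost certainly false, and the contradiction argument you propose would stall on exactly those boundary points. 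A correct version of this strategy must either show that such boundary extreme points, while not rank-one, still lie in $\G$ (e.g., by exhibiting them as convex combinations of rank-one points of $\mathcal R$), or avoid the slicing altogether and work with the global set --- whose convexity is itself not obvious a priori, since the cuts are only locally valid. Your two alternative routes (matching against known convex-hull descriptions from \cite{Burer2015,Argue2020}, or showing the separation problem \eqref{equ:separation_problem} always admits a slab-form optimum) are reasonable directions, but they too are only proposals. As it stands, the conjecture remains unproven by your argument.
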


\subsection{Special case: TTRS (\( b=0 \) and \( r=0 \))}

Setting \( b = 0 \) and \( r = 0 \) in \eqref{equ:qcqp_main} with
\( a < 0 \) to ensure feasibility, we explore the two-trust-region
subproblem (TTRS). We generate 15,000 random instances of this type
for each dimension $ 2 \le n \le 10 $ and bootstrap from the $
\Shor $ and \( \Shor\cap\KSOC \) relaxations. The results are shown
in Tables \ref{tab:ttrs_shor} and \ref{tab:ttrs_ksoc}. The trends
in these tables are similar to what we saw in the general case in
Section~\ref{sec:numexp_shorksoc_general}. In particular, our cuts are
less effective in higher dimensions.

\begin{table}[ht]
  \centering
  \begin{tabular}{r|r|rrr|rr}
      \( n \) & Inexact initial & Improved & Avg cuts & Avg gap closure & Closed & Avg cuts \\ \hline
    2 &  1404 &   364 &  16 &  33\% &    86 &   4 \\
    3 &  1287 &   172 &  15 &  27\% &    34 &   4 \\
    4 &   985 &    79 &  12 &  27\% &    20 &   5 \\
    5 &   745 &    34 &   9 &  22\% &     7 &   3 \\
    6 &   508 &    14 &   7 &  22\% &     3 &   2 \\
    7 &   454 &     4 &   5 &  25\% &     2 &   3 \\
    8 &   347 &     5 &   8 &  58\% &     0 & --- \\
    9 &   293 &     0 & --- &   --- &     1 &   2 \\
   10 &   251 &     1 &   4 &   2\% &     0 & ---
  \end{tabular}
  \caption{Results for the \( \Shor \) bootstrap relaxation on 15,000
  random TTRS instances for each dimension $n$. The columns {\em
  Inexact initial\/}, {\em Improved\/}, and {\em Closed\/} report the
  number of instances out of 15,000 in each category.}
  \label{tab:ttrs_shor}
\end{table}
\begin{table}[ht]
  \centering
  \begin{tabular}{r|r|rrr|rr}
      \( n \) & Inexact initial & Improved & Avg cuts & Avg gap closure & Closed & Avg cuts \\ \hline
    2 &    31 &     4 &  20 &  24\% &     0 & --- \\
    3 &    78 &     7 &  43 &  29\% &     1 &   7 \\
    4 &    63 &     3 &  55 &  19\% &     0 & --- \\
    5 &    34 &     1 &  59 &   6\% &     0 & --- \\
    6 &    22 &     0 & --- &   --- &     0 & --- \\
    7 &    16 &     0 & --- &   --- &     0 & --- \\
    8 &    14 &     0 & --- &   --- &     0 & --- \\
    9 &     6 &     0 & --- &   --- &     0 & --- \\
   10 &     4 &     0 & --- &   --- &     0 & ---
  \end{tabular}
  \caption{Results for the \( \Shor\cap\KSOC \) bootstrap relaxation
  on the same 15,000 random TTRS instances as depicted in Table
  \ref{tab:ttrs_shor} for each dimension $n$. The columns {\em
  Inexact initial\/}, {\em Improved\/}, and {\em Closed\/} report the
  number of instances out of 15,000 in each category.}
  \label{tab:ttrs_ksoc}
\end{table}

We catalog the following example showing an explicit case for $n=2$ in
which our cuts close the gap for TTRS compared to just applying $\Shor \cap
\KSOC$.

\begin{example}
  Consider the instance with $n=2$, $r = 0$, $R = 1$, $a = -0.77$, and
  \[ 
  b = \begin{pmatrix} 0 \\ 0 \end{pmatrix}, \quad
  c = \begin{pmatrix*}[r] -0.38 \\ 0.18 \end{pmatrix*}, \quad
  H = \begin{pmatrix*}[r] -1.32 & 0.21 \\  0.21 & -0.81 \end{pmatrix*}, \quad
  g = \begin{pmatrix*}[r] -0.25 \\ 0.05 \end{pmatrix*}.
  \]
  The (approximate) optimal value of \( \min\{ H \bullet X + 2 \, g^T x : (x,X) \in
  \Shor\cap\KSOC \} \) is $-0.9087$ and the solution is not rank-1.
  Solving the separation problem starting from this relaxation, we
  obtain the (approximate) cut corresponding to
  \[ 
      g_l = \begin{pmatrix*}[r] 1.8633 \\ -0.8826 \end{pmatrix*}, \quad
    f_l = 4.1236, \quad
    [q+l]_{\min} = 1.2604,
\]
\[
    H_q = \begin{pmatrix*}[r] -4.9035 & 0.0000 \\ 0.0000 & -4.9035 \end{pmatrix*}, \quad
    g_q = \begin{pmatrix*}[r] -1.8633 \\ 0.8826 \end{pmatrix*}, \quad
    f_q = 2.0403.
  \]
  Solving the relaxation with this cut, results in the (numerically) rank-1 solution
  \[ 
  Y(x^\star, X^\star) =
  \begin{pmatrix*}[r]
     1.0000 &  -0.9065 &   0.4223 \\
    -0.9065 &   0.8217 &  -0.3828 \\
     0.4223 &  -0.3828 &   0.1783 
  \end{pmatrix*}
  \]
  with (approximate) optimal value $-0.8943$.
\end{example}

\section{Conclusions} \label{sec:conclusions}

In this paper, we have derived a new class of valid linear inequalities
for SDP relaxations of problem \eqref{equ:qcqp_main}. These cuts are
separable in polynomial time, which, by the equivalence of separation
and optimization (up to an $\epsilon > 0$ optimality tolerance), ensures that the SDP relaxation enforcing all of these
inequalities is polynomial-time solvable. We have also shown that a
special case of our cuts has been applied by Chen et al.~\cite{Chen2017}
to obtain the convex hull of an important substructure arising in the
OPF problem. In addition, we have extended our methodology to derive
new, locally valid, second-order-cone cuts for nonconvex quadratric
programs over the mixed polyhedral-conic set $\{ x \ge 0 : \|x\| \le 1
\}$. Using specific examples as well as computational experiments, we
have demonstrated that the new class of valid inequalities strengthens
the strongest known SDP relaxation, \( \Shor\cap\KSOC \), especially in
low dimensions.

For the specific 2-dimensional feasible set \( \F = \{ x \in \R^2 :
\|x\| \le 1, x \le b^T x \} \), our computational experiments indicate
that our cuts intersected with $\Shor\cap\KSOC$ capture the relevant
convex hull $\G$. We leave this as a conjecture requiring further
research. Furthermore, when $b = e$, we also conjecture that the locally
valid cuts (\ref{equ:nonnegsoc}), which are derived from slabs, are by
themself enough to capture $\G$. For general $\F$, however, our cuts do
not close the gap fully, and so there remains room for improvement.

One limitation of our approach is the assumption that the SOC constraint
\eqref{sub:qcqp_cons_soc} shares the identity Hessian with the hollow
ball \eqref{sub:qcqp_cons_quad}. If instead we are presented with a
general SOC constraint $\| J x - c \| \le b^T x - a$, where $J \in \R^{n
\times n}$ is arbitrary, one idea would be to bound
\begin{align*}
b^T x - a
&\ge \| J x - c \| \\
&\ge \| x - c \| - \| x - Jx \| \\
&= \| x - c \| - \| (I - J) x \| \\
&\ge \| x - c \| - \sqrt{ \lambda_{\max}[(I - J)^T (I - J)]} \, R,
\end{align*}
which yields the valid constraint $\|x - c\| \le b^T x - \left(a -
\sqrt{\lambda_{\max} [(I - J)^T (I - J)]} R\right)$, to which our
methodology can be applied. Additional options for handling arbitrary
Hessians can be considered by refining the deriviations of Section
\ref{sec:new_ineqs}.

Further opportunities for future research include streamlining the
separation subroutine, investigating the effectiveness of our cuts
in higher dimensions, and examining other applications where the
structure of \eqref{equ:qcqp_main} appears. Also, the idea of using the
self-duality of a cone to derive valid linear cuts could be applied to
other self-dual cones or possibly even non-self-dual cones.

\section*{Acknowledgments}

The authors acknowledge the support of their respective universities,
which allowed the first author to visit the second author in 2019--20,
when this research was initiated. In addition, the authors wish
to thank Dan Bienstock for constructive comments as well as two
anonymous referees, whose suggestions improved the paper significantly.

\bibliographystyle{abbrv}
\bibliography{main}

\singlespacing

\end{document}